\documentclass{amsart}
\usepackage{mathtools,amsthm}
\usepackage[letterpaper, margin=1.3in]{geometry}
\usepackage[utf8]{inputenc}
\usepackage[T2A,T1]{fontenc}
\usepackage{imakeidx}
\usepackage{enumitem}
\usepackage{mathrsfs}
\usepackage{tikz-cd}
\usepackage{amssymb}
\usepackage{xspace}
\usepackage{enumitem}
\usepackage{etoolbox}
\usepackage{stmaryrd}
\usepackage{setspace}
\usepackage{multirow}
\usepackage{pdflscape}
\usepackage{adjustbox}
\usepackage{nicematrix}
\usepackage[pagebackref]{hyperref}
\usepackage[nameinlink]{cleveref}
\usepackage{todonotes}

\hypersetup{colorlinks={true},linkcolor={blue},urlcolor=blue}

\makeindex[columns=2, title={Index of Notation},options={-s structural/index_style.ist}]
\makeatletter
\let\ori@idxitem\@idxitem
\def\@idxitem{\clear@penalties\ori@idxitem}
\def\clear@penalties{\subitem@count=3 }
\newcount\subitem@count
\def\subitem{%
  \advance\subitem@count -1
  \par
  \ifnum\subitem@count>0 \penalty10000 \fi
  \ori@idxitem%
}
\makeatother

\newtheorem{theorem}{Theorem}[subsection]
\newtheorem{lemma}[theorem]{Lemma}
\newtheorem{corollary}[theorem]{Corollary}
\newtheorem{proposition}[theorem]{Proposition}

\newtheorem{innercustomgeneric}{\customgenericname}
\providecommand{\customgenericname}{}
\newcommand{\newcustomtheorem}[2]{%
  \newenvironment{#1}[1]
  {%
   \renewcommand\customgenericname{#2}%
   \renewcommand\theinnercustomgeneric{\kern-0.3em ##1}%
   \innercustomgeneric
  }
  {\endinnercustomgeneric}
}

\newcustomtheorem{specialtheorem}{}
\crefname{specialtheorem}{}{}

\theoremstyle{definition}
\newtheorem{definition}[theorem]{Definition}

\theoremstyle{remark}
\newtheorem{remark}[theorem]{Remark}

\newcommand{\Z}{\mathbb{Z}}
\newcommand{\Q}{\mathbb{Q}}

\renewcommand{\C}{\mathbb{C}}

\newcommand{\defeq}{\vcentcolon=}

\newcommand{\isequal}{\stackrel{?}{=}}
\newcommand{\qbinom}[3]{\genfrac{[}{]}{0pt}{}{#1}{#2}_{#3}}

\begin{document}

\title[Unequal-parameter Kostka--Foulkes polynomials of type $C_n$]{Unequal-parameter Kostka--Foulkes polynomials of type $C_n$ at fundamental weights}
\author{Murilo Corato-Zanarella}

\begin{abstract}
We establish an explicit formula for the Kostka--Foulkes polynomial of type $C_n$ with unequal parameters at fundamental weights. As an application, we deduce an explicit Lusztig--Kato formula for the Hecke algebra of the special (but not hyperspecial) maximal compact subgroup of a quasi-split but non-split special orthogonal group over a $p$-adic field.
\end{abstract}
\maketitle
\setcounter{tocdepth}{2}
\let\oldtocsection=\tocsection
\let\oldtocsubsection=\tocsubsection
\let\oldtocsubsubsection=\tocsubsubsection
\renewcommand{\tocsection}[2]{\hspace{0em}\oldtocsection{#1}{#2}}
\renewcommand{\tocsubsection}[2]{\hspace{1em}\oldtocsubsection{#1}{#2}}
\renewcommand{\tocsubsubsection}[2]{\hspace{2em}\oldtocsubsubsection{#1}{#2}}
\tableofcontents

\section{Introduction}
For a simple Lie algebra $\mathfrak{g}$ and dominant weights $\lambda$ and $\mu,$ the Kostka--Foulkes polynomials $K^{\mathfrak{g}}_{\lambda,\mu}(t)$ (also called Lusztig's $t$-weight multiplicities) are ubiquitous objects in the study of the representation theory of $\mathfrak{g}.$ For example, the Lusztig--Kato formula \cite{Lusztig,Kato} interprets them as the transition coefficients between two natural bases of symmetric functions: the characters of highest weight representations and the Satake transforms of the Cartan basis.

When $\mathfrak{g}$ has roots of different lengths, there is a natural two-variable extension $K^{\mathfrak{g}}_{\lambda,\mu}(t,u)$ of these polynomials such that $K^{\mathfrak{g}}_{\lambda,\mu}(t)=K^{\mathfrak{g}}_{\lambda,\mu}(t,t).$ This article studies such two-variable Kostka--Foulkes polynomial for the Lie algebra of type $C_n.$ We note that this is different from the $q,t$-Kostka polynomials of \cite{Macdonaldqt}, and different from the double deformation in terms of parabolic subsystems of \cite{Lecouvey}.

\subsection{Main results}
Our main result is an explicit formula for the two-variable Kostka--Foulkes polynomial of type $C_n$ at fundamental weights.
\begin{specialtheorem}{Theorem A}[{\Cref{ThmA}}]
    We denote by $\omega_1,\ldots,\omega_n$ the fundamental weights of type $C_n.$ Denote $\omega_0=0.$ Then for $0\le a,b\le n,$ we have that $K^{C_n}_{\omega_a,\omega_b}$ is zero unless $a\ge b$ and $a\equiv b\mod 2.$ In that case, we have
    \begin{equation}
        K^{C_n}_{\omega_{a},\omega_b}(t,u)=t^{\frac{a-b}{2}}\frac{t^{n+1-a}-1}{t^{n+1-b}-1}\sum_{i=0}^{\frac{a-b}{2}}(-u)^i\left(t^{i^2-2i}\qbinom{n-b+1}{2i,\frac{a-b}{2}-i,n+1-\frac{a+b}{2}-i}{t}(t;t^2)_i\right).
    \end{equation}
\end{specialtheorem}

As an application, we deduce an explicit Lusztig--Kato formula for non-split orthogonal groups at fundamental weights. We refer to \Cref{SectionLK} for the definitions.
\begin{specialtheorem}{Theorem B}[{\Cref{ThmB}}]
    Let $F/\Q_p$ be a finite extension with residue field of cardinality $q,$ and let $V$ be a quadratic space over $F$ of discriminant $1\in\Q_p^\times/\Q_p^{\times2},$ Hasse invariant $-1$ and dimension $2n+2$ for some $n\ge1.$ For $0\le\delta\le n,$ we have
    \begin{equation}
        \sum_{\substack{\delta\le i\le n\\i\equiv\delta\mod2}}\frac{q^{\delta+1}-1}{q^{i+1}-1}\qbinom{i+1}{\frac{i-\delta}{2}}{q^2}\mathrm{Sat}(T_{n-i})=q^{\frac{1}{2}(n^2+n-\delta^2-\delta)}\sum_{\substack{\delta\le i\le n\\i\equiv\delta\mod2}}\frac{\delta+1}{i+1}\binom{i+1}{\frac{i-\delta}{2}}s_{n-i}(\boldsymbol{\mu}).
    \end{equation}
\end{specialtheorem}

\subsection{Notation on $t$-analogues}
For $n\in\Z_,$ we consider the $t$-analogues $[n]_t=\frac{1-t^n}{1-t}\in\Z[t,t^{-1}].$ This gives us the $t$-factorials $[n]_t!=\prod_{i=1}^n[i]_t\in\Z[t]$ for $n\in\Z_{\ge0},$ as well as the $t$-binomials $\qbinom{n}{a}{t}=\frac{\prod_{i=0}^{a-1}[n-i]_t}{[a]_t!}\in\Z[t,t^{-1}]$ for $n\in\Z,$ $a\in\Z_{\ge0}.$ Similarly, we consider the $t$-Pochhammer symbols $(a;t)_n=\prod_{i=0}^{n-1}(1-at^i)\in\Z[a,t]$ for $n\in\Z_{\ge0},$ as well as $(a;t)_\infty=\prod_{i\ge0}(1-at^i)\in\Z\llbracket a,t\rrbracket.$

\subsection*{Acknowledgments}
We want to thank C\'edric Lecouvey for helpful discussions related to Kostka--Foulkes polynomials.

\section{Computation of Kostka--Foulkes polynomial}
\subsection{Main formula}\label{KFsetup}
Let $W_n$ denote the Weyl group of type $C_n,$ and $R_n^+$ the set of positive roots. Denote $\Lambda_n=\Z^n$ the weight space, $\Lambda^+_n$ the subset of dominant weights, and $\Delta_n^+\subseteq\Lambda_n$ the subset of positive simple roots. We partition $R_n=R_n^s\sqcup R_n^l$ into short and long roots. We use similar notation such as $R_n^{+,s},R_n^{+,l},\Delta^{+,s}_n,\Delta^{+,l}_n.$ We write $\rho_n=\frac{1}{2}\sum_{\alpha\in R^+_n}\alpha$ for half the sum of positive roots, which belongs to $\Lambda^+_n$ in this case. We denote by $\ge$ the usual order on $\Lambda_n,$ namely $\mu\ge\lambda$ if $\mu-\lambda$ is a non-negative linear combination of positive roots. For $\lambda=(\lambda_1,\ldots,\lambda_n)\in\Lambda_n,$ we denote $\lvert\lambda\rvert\defeq\sum_{i=1}^n\lambda_i.$

Concretely, if $\Lambda_n=\Z e_1\oplus\cdots\Z e_n,$ then the roots are
\begin{equation}
    R_n^s=\{\pm e_i\pm e_j\text{ for }1\le i<j\le n\},\quad R_n^l=\{\pm 2e_i\text{ for }1\le i\le n\}
\end{equation}
with positive simple roots
\begin{equation}
    \Delta^+=\Delta^{+,s}\sqcup\Delta^{+,l}=\{e_1-e_2,\ e_2-e_3,\ldots,e_{n-1}-e_n\}\sqcup\{2e_n\}.
\end{equation}

We consider two variables $t,u,$ and for a root $\alpha\in R_n$ we denote
\begin{equation}
    t_\alpha=\begin{cases}
        t&\text{if $\alpha\in R_n^{s}$ is short,}\\
        u&\text{if $\alpha\in R_n^{l}$ is long.}
    \end{cases}
\end{equation}
\begin{definition}
    We consider the partition function $\mathcal{P}_n(\mu,t,u)\in\Z[t,u]$ for $\mu\in\Lambda_n$ characterized by
    \begin{equation}
        \prod_{\alpha\in R_n^+}(1-t_\alpha e^\alpha)^{-1}=\sum_{\mu\in\Lambda_n}\mathcal{P}_n(\mu,t,u)e^\mu,
    \end{equation}
    and the two-variable Kostka--Foulkes polynomial
    \begin{equation}
        K^{C_n}_{\lambda,\mu}(t,u)\defeq\sum_{w\in W_n}(-1)^w\mathcal{P}_n(w(\lambda+\rho_n)-(\mu+\rho_n),t,u).
    \end{equation}
    for $\lambda,\mu\in\Lambda^+_n.$ Note this is only nonzero when $\lambda\ge\mu.$
\end{definition}

Denote by $\omega_{k}=(\underbrace{1,\ldots,1}_k,\underbrace{0,\ldots,0}_{n-k})$ the fundamental weights for $0\le k\le n.$ The goal of this section is to prove the following theorem.
\begin{theorem}\label{KFthm}
    We have
    \begin{equation}
        K^{C_n}_{\omega_{2k},0}(t,u)=\frac{t^{n+1-k}-t^k}{t^{n+1}-1}\sum_{a=0}^k(-u)^a\left(t^{a^2-2a}\qbinom{n+1}{2a,k-a,n+1-k-a}{t}(t;t^2)_a\right).
    \end{equation}
\end{theorem}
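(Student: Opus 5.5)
We compute a generating function in $n$ and use the Weyl character formula in its $q$-analogue form. For $\mu=0$ and $\lambda=\omega_{2k}$, the Kostka--Foulkes polynomial is the coefficient of the character in the Hall--Littlewood expansion. Concretely, we use the identity
\[
\sum_{\lambda\in\Lambda_n^+}K^{C_n}_{\lambda,0}(t,u)\,\chi_\lambda=\frac{1}{W_0(t,u)}\sum_{w\in W_n}w\Bigl(\prod_{\alpha\in R_n^+}\frac{1-t_\alpha e^{-\alpha}}{1-e^{-\alpha}}\Bigr)\cdot(\text{normalization}),
\]
which is the standard consequence of the definition: the alternating sum over $W_n$ of $\mathcal{P}_n$ is the coefficient extraction of $\prod_{\alpha>0}(1-t_\alpha e^\alpha)^{-1}$ against the Weyl numerator. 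Rather than use this symmetric-function form, I would work directly with the definition. Write $\omega_{2k}+\rho_n$ and note that $w(\omega_{2k}+\rho_n)-\rho_n$ must lie in the root lattice cone. Then organize $K_{\omega_{2k},0}$ as $\sum_w(-1)^w\mathcal{P}_n(w(\omega_{2k}+\rho)-\rho)$.

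The key structural step is a branching/recursion in $n$. Factor the positive roots as $R_n^+=R_{n-1}^+\sqcup\{e_1\pm e_j\}_{j\ge2}\sqcup\{2e_1\}$. Grouping by the first coordinate, expand $\prod_{j\ge2}(1-te^{e_1-e_j})^{-1}(1-te^{e_1+e_j})^{-1}(1-ue^{2e_1})^{-1}$ in powers of $e^{e_1}$. The relevant weights $\omega_{2k}$ have entries in $\{0,1\}$, and $\rho_n=(n,n-1,\dots,1)$. Only $w$ with small displacement survive, so the coefficient of $e^{m e_1}$ (with $m$ small) is a polynomial in the $\mathcal{P}_{n-1}$-type data. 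This should give a recursion expressing $K^{C_n}_{\omega_{2k},0}$ in terms of $K^{C_{n-1}}_{\omega_{2k},0}$, $K^{C_{n-1}}_{\omega_{2k-2},0}$, and possibly $K^{C_{n-1}}_{\omega_{2k-1},\omega_1}$-type terms. To close the recursion I would simultaneously track the family $K^{C_n}_{\omega_a,\omega_b}$ for all $a\equiv b$, since Theorem A shows these are the natural companions. Alternatively, I would use the parabolic (Levi $C_{n-1}\times GL_1$) version of the Lusztig $q$-analogue, where the Levi's Kostka--Foulkes polynomials appear with explicit coefficients from the $e_1$-roots.

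With a recursion in hand, I would verify the closed form by induction on $n$. Set $F_n(k)$ equal to the right-hand side. The check that $F_n$ satisfies the recursion reduces to a $t$-trinomial identity, using the Pascal rules $\qbinom{n+1}{2a,k-a,n+1-k-a}{t}=\qbinom{n}{\cdots}{t}+t^{(\cdot)}\qbinom{n}{\cdots}{t}$ in each of the three slots. The prefactor $\frac{t^{n+1-k}-t^k}{t^{n+1}-1}$ combines with the trinomial to give $[n+1-2k]_t$-type cancellation. Organizing the sum by the power of $u$ separates it into independent identities for each $a$, which is why the $u$-dependence appears only through $(-u)^a$. Base cases: $k=0$ gives $1$, and $n=2k$ can be checked by hand or through the case $\omega_n$.

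The main obstacle is obtaining a clean closed recursion from the partition-function definition. The alternating sum has massive cancellation, and the contributions from $e_1+e_j$ roots mix levels $a$. If the direct branching is too messy, I would instead use Macdonald's formula for the spherical function: compute $\sum_k K_{\omega_{2k},0}\chi_{\omega_{2k}}$ by evaluating the Hall--Littlewood polynomial $P_0=1$ expansion $\chi_\lambda=\sum_\mu K_{\lambda\mu}P_\mu$. I would then restrict to $\mu=0$ via a specialization of the $x_i$ (e.g. the principal specialization $x_i=t^{\rho_i}$-type) under which the $P_\mu$ for $\mu\ne0$ become computable. The resulting $q$-series identity would be proven with $q$-binomial and $q$-Pochhammer summation, namely $q$-Chu--Vandermonde and the $(t;t^2)_a$ factor arising from a $q$-Gauss sum.
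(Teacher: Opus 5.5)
You do not yet have a proof: the recursion that everything depends on is never derived, and the branching you propose does not close on the family you plan to track.

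\textbf{The branching in $n$.} Take the parabolic form you suggest, with Levi $GL_1\times C_{n-1}$. Pulling the $W_{n-1}$-invariant factor $\prod(1-t_\alpha e^{\alpha})^{-1}$, taken over the positive roots involving $e_1$, out of the Weyl symmetrizer gives
\[
K^{C_n}_{\lambda,0}=\sum_{\nu'}c_{\lambda,\nu'}(t,u)\,K^{C_{n-1}}_{\nu',0}(t,u),
\]
with $\nu'$ ranging over $C_{n-1}$-dominant weights. For $\lambda=\omega_{2k}$ these include non-fundamental weights such as $2\omega_1$, and they genuinely contribute. Already for $n=2$, $k=1$ one computes
\[
K^{C_2}_{\omega_2,0}=t\cdot K^{C_1}_{0,0}+(t-1)\cdot K^{C_1}_{2\omega_1,0}=t+(t-1)u .
\]
So the entire $u$-dependence comes from a Levi weight outside the family $\{\omega_a,\omega_b\}$, and tracking the $K_{\omega_a,\omega_b}$ cannot close this recursion. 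The claim that only $w$ of ``small displacement'' survive is not an argument. The Pascal-rule verification and the separation by powers of $u$ are asserted for a recurrence you do not have.

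\textbf{The specialization fallback.} This is closer to something workable, but as stated it also falls short. A specialization cannot ``restrict to $\mu=0$''. It yields a single linear relation among all $K^{C_n}_{\omega_{2k},\omega_{2i}}$, $0\le i\le k$. To turn that into a recursion you would need three things, none of which is in the proposal.
\begin{enumerate}
\item[(i)] The specific unequal-parameter point $x$ with $e^{\alpha}(x)=t_\alpha^{-1}$ for $\alpha\in\Delta_n^+$. There $P_n(\mu,t,u)(x)=\frac{W_0(t,u)}{W_\mu(t,u)}e^{\mu}(x)$. A $t$-only principal specialization does not kill the $w\neq 1$ terms of $P_n(\mu,t,u)$ when $u\neq t$.
\item[(ii)] The reduction $K^{C_n}_{\omega_{2k},\omega_{2i}}=K^{C_{n-2i}}_{\omega_{2k-2i},0}$, which is what makes the relation triangular in $k$.
\item[(iii)] A proof that the closed form satisfies the resulting genuinely two-variable identity. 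You only attribute this to $q$-Chu--Vandermonde and a $q$-Gauss sum, without checking it.
\end{enumerate}

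\textbf{What the paper does instead.} The key idea avoids branching entirely. Since $L_n\prod_{\alpha\in R_n^{+,l}}(1-ue^{-\alpha})=\prod_{\alpha\in R_n^{l}}(1-ue^{\alpha})$ is $W_n$-invariant and cancels the long-root part of the weight in $\langle\cdot,\cdot\rangle$, the sums $\sum_\mu K^{C_n}_{\lambda,\mu}(t,u)\,[e^\mu]\mathrm{str}(L_n)$ lie in $\Z[t]$. Likewise the analogous sums with $S_n$ lie in $\Z[u]$. Computing $\mathrm{str}(L_n)$ and $\mathrm{str}(S_n)$ at fundamental weights via the straightening law gives two triangular recurrences in $(n,k)\mapsto(n-2i,k-i)$. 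One determines $K^{C_n}_{\omega_{2k},0}(t,0)$, and the other determines each $u^a$-coefficient from it. Both are checked with the $q$-binomial theorem.
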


\begin{corollary}\label{KFcor}
    We have
    \begin{equation}
        K^{C_n}_{\omega_{2k},0}(t,t)=\frac{t^{2(n+1-k)}-t^{2k}}{t^{2(n+1)}-1}\qbinom{n+1}{k}{t^2}
    \end{equation}
    and
    \begin{equation}
        K^{C_n}_{\omega_{2k},0}(t,t^2)=\frac{t^{n+1-k}-t^{k}}{t^{n+1}-1}\qbinom{n+1}{k}{t^2}.
    \end{equation}
\end{corollary}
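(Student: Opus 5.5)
Both identities follow by specializing the formula of \Cref{KFthm} at $u=t$ and at $u=t^2$. In each case the plan is to show that the $a$-sum collapses to $\qbinom{n+1}{k}{t^2}$, up to the scalar $\frac{t^{n+1-k}-t^k}{t^{n+1}-1}$ or a closely related one. Write
\begin{equation}
S_k(u)=\sum_{a=0}^k(-u)^a t^{a^2-2a}\qbinom{n+1}{2a,k-a,n+1-k-a}{t}(t;t^2)_a .
\end{equation}
First factor the $t$-multinomial as $\qbinom{n+1}{k+a}{t}\qbinom{k+a}{2a}{t}$. Next use $(t;t^2)_a=(t;t)_{2a}/(t^2;t^2)_a$. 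Since
\begin{equation}
\qbinom{k+a}{2a}{t}(t;t)_{2a}=(t^{k-a+1};t)_{2a},
\end{equation}
the summand becomes
\begin{equation}
(-u)^a t^{a^2-2a}\qbinom{n+1}{k+a}{t}\frac{(t^{k-a+1};t)_{2a}}{(t^2;t^2)_a}.
\end{equation}
This is a terminating basic hypergeometric series in $a$, with base $t$ in the first factor and base $t^2$ in the denominator.

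For $u=t^2$ the claim is $S_k(t^2)=\qbinom{n+1}{k}{t^2}$. I would prove this by induction on $k$, or better by comparing generating functions in $n$. Treat both sides as polynomials in $x=t^{n+1}$. In the form
\begin{equation}
\qbinom{n+1}{k+a}{t}=\frac{(t^{n+2-k-a};t)_{k+a}}{(t;t)_{k+a}},
\end{equation}
each summand is polynomial in $x$ of degree $k+a$. The right side $\qbinom{n+1}{k}{t^2}=\frac{(x^2t^{-2k+2};t^2)_k\cdots}{(t^2;t^2)_k}$ is polynomial in $x^2$ of degree $k$, so both sides have degree at most $2k$ in $x$. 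Hence it suffices to check agreement at $2k+1$ values of $x$. The natural choice is $n+1=0,1,\dots,k-1$, where the right side vanishes, together with large or symmetric values. An alternative I would try first is a $q$-Pfaff–Saalschütz/$q$-Chu–Vandermonde type identity. Concretely, I would split $\qbinom{n+1}{k}{t^2}$ via the standard identity expressing a $t^2$-binomial as a sum of products of $t$-binomials (the "$q\to q^2$" quadratic transformation, e.g. Andrews' identity $\qbinom{N}{k}{q^2}=\sum_j(\ldots)\qbinom{N}{k+j}{q}\qbinom{k+j}{2j}{q}(q;q^2)_j\cdots$), and match the resulting terms.

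For $u=t$ I would relate the result to the $u=t^2$ case rather than redo the work. The target $\frac{t^{2(n+1-k)}-t^{2k}}{t^{2(n+1)}-1}\qbinom{n+1}{k}{t^2}$ equals
\begin{equation}
\frac{t^{n+1-k}-t^k}{t^{n+1}-1}\cdot\frac{t^{n+1-k}+t^k}{t^{n+1}+1}\qbinom{n+1}{k}{t^2},
\end{equation}
so the needed claim is $S_k(t)=\frac{t^{n+1-k}+t^k}{t^{n+1}+1}\qbinom{n+1}{k}{t^2}$. As a sanity check, $k=0$ gives $1=1$. For $k=1$, $S_1(t)=[n+1]_t-t^{-1}[n+1]_t[n]_t(1-t)/[2]_t$, which can be checked directly; I would do such small cases first to pin down normalizations. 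The general proof would again be the polynomial-in-$x$ argument, now with degree bound $2k+1$ after clearing the denominator $x+1$. Alternatively, both cases can be done at once by proving a single identity for $S_k(u)$ at $u=t^{1+\epsilon}$. A cleaner route is a three-term recursion in $k$ satisfied by $S_k(u)$ for general $u$, derived from the $t$-Pascal rule applied to the multinomial.

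The main obstacle is the summation identity itself: a mixed-base ($t$ and $t^2$) terminating sum. I expect the key tool is a known quadratic summation, such as the $q$-analogue of Bailey's or Andrews' $q$-binomial quadratic identity. If I cannot locate it, the fallback is a creative-telescoping/WZ certificate in $a$ for the recursion in $n$. The interpolation argument in $x=t^{n+1}$ is the robust backup, but it requires an evaluation set where the left side is easy to compute. Natural candidates are $x=t^{j}$ with $0\le j<k$, where the $t$-binomials truncate, together with $x\to\infty$ for the leading coefficient.
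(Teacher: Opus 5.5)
Your reduction is the same as the paper's. Putting $u=t^2$, resp.\ $u=t$, in \Cref{KFthm} turns $(-u)^at^{a^2-2a}$ into $(-1)^at^{a^2}$, resp.\ $(-1)^at^{a^2-a}$. After factoring $\frac{t^{2(n+1-k)}-t^{2k}}{t^{2(n+1)}-1}=\frac{t^{n+1-k}-t^k}{t^{n+1}-1}\cdot\frac{t^{n+1-k}+t^k}{t^{n+1}+1}$, the corollary is equivalent to
\begin{gather*}
\sum_{a=0}^k(-1)^at^{a^2}\qbinom{n+1}{2a,k-a,n+1-k-a}{t}(t;t^2)_a=\qbinom{n+1}{k}{t^2},\\
\sum_{a=0}^k(-1)^at^{a^2-a}\qbinom{n+1}{2a,k-a,n+1-k-a}{t}(t;t^2)_a=\frac{t^{n+1-k}+t^k}{t^{n+1}+1}\qbinom{n+1}{k}{t^2}.
\end{gather*}
Your rewriting of the summand is also correct. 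But these two mixed-base identities are the whole content of the corollary, and your proposal does not prove them; it only lists strategies. The paper does not prove them here either: it quotes them as Lemmas 2.1.1 and 2.1.2 of \cite{CZ2} with $(a,b,\lambda)=(k,n+1-k,t^{-1})$. So your argument is complete exactly up to the point where the paper appeals to that reference, and the gap is everything after it. Note also that the ``Andrews identity'' you sketch, once its elided factor is filled in as $(-1)^jq^{j^2}$, is literally the first identity. Invoking it without a precise source is therefore circular.

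The interpolation argument you call the robust backup is short of conditions as set up. With $x=t^{n+1}$, the right side of the first identity is $(x^2t^{2-2k};t^2)_k/(t^2;t^2)_k$, with roots $x=\pm t^j$ for $0\le j\le k-1$.
\begin{itemize}
\item At $x=t^j$ every $\qbinom{n+1}{k+a}{t}$ vanishes, so the left side vanishes for free.
\item The leading coefficients agree: both are $(-1)^kt^{k-k^2}/(t^2;t^2)_k$, coming from the $a=k$ term.
\end{itemize}
That is only $k+1$ of the $2k+1$ conditions you need. At $x=-t^j$ nothing truncates, so you must evaluate a genuine sum there. This amounts to proving that the left side is even in $x$, and that is where all the difficulty lies. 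The only other visible symmetry of the right side, $x\mapsto t^{k-1}/x$ (i.e.\ $n+1\mapsto k-1-(n+1)$), maps the free zeros $t^j$ to one another, so ``symmetric values'' do not close the gap. The second identity has the same defect: its right side has degree $2k$ in $x$ with roots $t^j$ ($0\le j\le k-1$), $-t^j$ ($1\le j\le k-1$) and $-t^{2k}$, and only the first family is free.

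Finally, a small slip. At $u=t$, $k=1$, the $a=1$ coefficient is $(-t)\cdot t^{-1}=-1$. This gives $S_1(t)=[n+1]_t-[n+1]_t[n]_t(1-t)/[2]_t=\frac{t+t^n}{1+t}[n+1]_t$, which matches the target; with your extra factor $t^{-1}$ it would not.
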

\begin{remark}
    We note that the above formula for $K^{C_n}_{\omega_{2k},0}(t,t)$ was already known by \cite[Theorem 7.2]{Lecouvey-Lenart}.
\end{remark}
\begin{proof}
We need to prove that
\begin{equation}
    \sum_{a=0}^k\left((-1)^at^{a^2}\qbinom{n+1}{2a,k-a,n+1-k-a}{t}(t;t^2)_a\right)\isequal\qbinom{n+1}{k}{t^2}
\end{equation}
and 
\begin{equation}
    \sum_{a=0}^k\left((-1)^at^{a^2-a}\qbinom{n+1}{2a,k-a,n+1-k-a}{t}(t;t^2)_a\right)\isequal\frac{t^{n+1-k}+t^k}{t^{n+1}+1}\qbinom{n+1}{k}{t^2}
\end{equation}
These are \cite[Lemma 2.1.1]{CZ2} for $(a,b,\lambda)=(k,n+1-k,t^{-1})$ and \cite[Lemma 2.1.2]{CZ2} for $(a,b,\lambda)=(k,n+1-k,t^{-1}),$ respectively.
\end{proof}

\subsection{Straightening laws}\label{strSection}
For $f\in\Z[\Lambda_n],$ we denote
\begin{equation}
    J_n(f)\defeq\sum_{w\in W_n}(-1)^ww(f),
\end{equation}
and for $\lambda\in\Lambda_n$ we consider
\begin{equation}
    \chi_n(\lambda)\defeq\frac{J_n(e^{\lambda+\rho_n})}{J_n(e^{\rho_n})}\in\Z[\Lambda_n],\quad \lambda\in\Lambda_n.
\end{equation}
For $\mu\in\Lambda_n$ we also consider
\begin{equation}
    W_\mu(t,u)\defeq\sum_{\substack{w\in W_n\\ w(\mu)=\mu}}\prod_{\substack{\alpha\in R^+_n\\-w\alpha\in R^+_n}}t_\alpha\in\Z[t,u]
\end{equation}
and the Hall--Littlewood polynomials
\begin{equation}
    P_n(\mu,t,u)\defeq\frac{1}{W_\mu(t,u)}\frac{J_n(e^{\mu+\rho_n}\prod_{\alpha\in R_n^+}(1-t_{\alpha}e^{-\alpha}))}{J_n(e^{\rho_n})}\in\Z[t,u][\Lambda_n]
\end{equation}
and
\begin{equation}
    Q_n(\mu,t,u)\defeq W_\mu(t,u)\cdot P_n(\mu,t,u)\in\Z[t,u][\Lambda_n].
\end{equation}
Recall we also have the inner product
\begin{equation}
    \langle\cdot,\cdot\rangle\colon\Q[t,u][\Lambda_n]\times\Q[t,u][\Lambda_n]\to\Q\llbracket t,u\rrbracket
\end{equation}
given by
\begin{equation}
\langle f,g\rangle=\frac{1}{\# W_n}\mathrm{CT}\left(f\cdot\bar{g}\cdot\prod_{\alpha\in R_n}\frac{1-e^\alpha}{1-t_\alpha e^\alpha}\right)
\end{equation}
where $\bar{g}$ is the linear involution $\bar{e^\lambda}=e^{-\lambda}$ and $\mathrm{CT}$ denotes the coefficient of $e^0$ on the expansion in $\Q[\Lambda_n]\llbracket t,u\rrbracket.$ More generally, we denote $[e^\mu](f)=\mathrm{CT}(e^{-\mu}f)$ the coefficient of $e^\mu.$

\begin{proposition}\label{PQ=1}
    If $\lambda,\mu\in\Lambda^+_n,$ then $\langle P_n(\lambda,t,u),Q_n(\mu,t,u)\rangle=\delta_{\lambda,\mu}.$
\end{proposition}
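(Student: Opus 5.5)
This is Macdonald's orthogonality for Hall--Littlewood polynomials with unequal parameters, and I would follow the standard proof (Macdonald, \emph{Orthogonal polynomials associated with root systems}, or the formal-series version in his Séminaire Lotharingien notes). Throughout write $\Delta_t=\prod_{\alpha\in R_n}\frac{1-e^\alpha}{1-t_\alpha e^\alpha}$.

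The first step is to rewrite $P_n$ in symmetrized form. Set $\Delta=\prod_{\alpha\in R_n^+}(e^{\alpha/2}-e^{-\alpha/2})$, so that $J_n(e^{\rho_n})=e^{\rho_n}\prod_{\alpha>0}(1-e^{-\alpha})=\Delta$ by the Weyl denominator formula. Since $\Delta$ is $W_n$-antiinvariant, and since $\prod_{\alpha>0}(1-t_\alpha e^{-\alpha})$ is invariant under $w$ up to the interchange of $\alpha$ and $-\alpha$ (the parameter $t_\alpha$ is $W_n$-invariant), we get
\begin{equation}
Q_n(\mu,t,u)=\sum_{w\in W_n}w\Big(e^{\mu}\prod_{\alpha\in R_n^+}\frac{1-t_\alpha e^{-\alpha}}{1-e^{-\alpha}}\Big).
\end{equation}
Grouping $w$ by cosets of the stabilizer $W_\mu$, the sum over $W_\mu$ contributes the Poincaré-type factor $W_\mu(t,u)$; this uses Macdonald's identity $\sum_{v\in W_\mu}v\prod_{\alpha\in R_\mu^+}\frac{1-t_\alpha e^{-\alpha}}{1-e^{-\alpha}}=W_\mu(t,u)$, valid with unequal parameters. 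Consequently $P_n(\mu)=\sum_{w\in W_n/W_\mu}w\big(e^\mu\prod_{\alpha>0}\frac{1-t_\alpha e^{-\alpha}}{1-e^{-\alpha}}\big)$.

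The second step establishes triangularity and the leading coefficient: $P_n(\lambda)=m_\lambda+\sum_{\nu<\lambda}c_{\lambda\nu}m_\nu$, where $m_\nu$ are the orbit sums. This follows from the formula in the first step, by expanding in the region where $e^{-\alpha}$ is small and using $W_n$-invariance.

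The third step computes the pairing. Since $\Delta_t$ and $P_n(\lambda)$ are $W_n$-invariant, substitute $Q_n(\mu)=\sum_w w(\cdots)$ into the constant term. Each of the $\#W_n$ terms gives the same contribution, so
\begin{equation}
\langle P_n(\lambda),Q_n(\mu)\rangle=\mathrm{CT}\Big(P_n(\lambda)\,e^{-\mu}\prod_{\alpha>0}\frac{1-t_\alpha e^{\alpha}}{1-e^{\alpha}}\cdot\Delta_t\Big)=\mathrm{CT}\Big(P_n(\lambda)\,e^{-\mu}\prod_{\alpha>0}\frac{1-e^{-\alpha}}{1-t_\alpha e^{-\alpha}}\Big).
\end{equation}
Expanding the last product in $Q[t,u]\llbracket e^{-\alpha}\rrbracket$ gives $1+(\text{terms }e^{-\beta},\ \beta>0)$. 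The constant term therefore equals the coefficient of $e^{\mu}$ in $P_n(\lambda)$ plus contributions from coefficients of $e^{\mu+\beta}$ with $\beta>0$. If $\lambda\not\geq\mu$, then $P_n(\lambda)$ has no monomials $e^{\mu+\beta}$ with $\beta\ge0$, so the pairing vanishes. By symmetry of the form (conjugate, using $\bar{P}$ relations or recompute with roles swapped), the case $\mu\not\geq\lambda$ also vanishes. If $\lambda=\mu$, dominance leaves only the leading $e^\lambda$ with coefficient $1$, giving $1$.

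The main obstacle is the justification of the formal manipulations. The problem is that $\Delta_t$ is not a Laurent polynomial, so the exchange of the $W_n$-sum with CT must be done in a setting where each term expands consistently. One way is to work in $\Q(t,u)$ by viewing the pairing as an integral over the torus with $|t|,|u|<1$; there CT is the torus integral and $W_n$-invariance of Haar measure justifies the swap. This is what I would use. A second obstacle is the symmetric-case argument, i.e. showing that the pairing is symmetric (equivalently that $\overline{P_n(\lambda)}$ relates to $P_n(-w_0\lambda)=P_n(\lambda)$ since $w_0=-1$ in type $C_n$). This holds because $w_0=-1$ makes $\bar f=f$ for invariant $f$, which makes the form symmetric, so the second vanishing case follows directly.
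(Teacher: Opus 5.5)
Your argument is correct in outline, but it follows Macdonald's classical route rather than the paper's. The paper uses neither triangularity of $P_n(\lambda)$ nor symmetry of the form. It expands \emph{both} $P_n(\lambda)$ and $\overline{Q_n(\mu)}$ as alternants and cancels $J_n(e^{\rho_n})J_n(e^{-\rho_n})=\prod_{\alpha\in R_n}(1-e^\alpha)$ against the weight. It then collapses the double sum over $W_n\times W_n$ to a single sum, leaving constant terms of $e^{\mu-w\lambda}\prod_{\alpha\in R_n^+\cap wR_n^-}\frac{t_\alpha-e^\alpha}{1-t_\alpha e^\alpha}$. A positivity argument finishes: $w\lambda-\mu$ would be a nonnegative combination of roots on which $w\lambda$ pairs nonpositively and $\mu$ nonnegatively. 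This forces $w\lambda=\mu$, and the surviving $w\in W_\lambda$ sum to exactly $W_\lambda(t,u)$. So both vanishing directions and the normalization come out of one self-contained computation. You instead collapse only the $Q$-side, obtaining $\langle f,Q_n(\mu)\rangle=\mathrm{CT}\bigl(f\,e^{-\mu}\prod_{\alpha>0}\frac{1-e^{-\alpha}}{1-t_\alpha e^{-\alpha}}\bigr)$ for invariant $f$. You then need unitriangularity of $P_n(\lambda)$ for one direction and symmetry of the form for the other, via $\langle P_n(\lambda),Q_n(\mu)\rangle=\frac{W_\mu(t,u)}{W_\lambda(t,u)}\langle P_n(\mu),Q_n(\lambda)\rangle$. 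This costs an extra lemma. However, triangularity is exactly what the paper invokes without proof when it asserts that the $P_n(\mu,t,u)$ form a basis. Your constant-term formula is also essentially the computation behind $K^{C_n}_{\lambda,\mu}=\langle\chi_n(\lambda),Q_n(\mu)\rangle$.

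There are three minor corrections.

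First, your coset formula for $P_n(\mu)$ must take the product over $R_n^+\setminus R_\mu^+$ only. As written, over all of $R_n^+$, the summand depends on the coset representative. Macdonald's Poincaré identity is in fact unnecessary. Expand each term of $Q_n(\lambda)=\sum_w w(\cdots)$ in powers of $e^{-\beta}$, $\beta>0$. The $w$-term has top exponent $w\lambda$ with coefficient $\prod_{\alpha>0,\,w\alpha<0}t_\alpha$, which gives $[e^\lambda]Q_n(\lambda)=W_\lambda(t,u)$ and the triangularity at once.

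Second, your ``main obstacle'' is not one in the paper's formal setting. Set $F=e^{\mu}\prod_{\alpha>0}\frac{1-t_\alpha e^{-\alpha}}{1-e^{-\alpha}}$. Then $w(\overline{F})\Delta_t=w(\overline{F}\Delta_t)$, and $\overline{F}\Delta_t=e^{-\mu}\prod_{\alpha>0}\frac{1-e^{-\alpha}}{1-t_\alpha e^{-\alpha}}$, where the denominators $1-e^{\alpha}$ have cancelled. So each summand already lies in $\Q[\Lambda_n]\llbracket t,u\rrbracket$, on which $W_n$ acts with $\mathrm{CT}$ invariant, and no torus integral is needed.

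Third, symmetry of the form does not require $w_0=-1$, since $\overline{\Delta_t}=\Delta_t$ for any root system.
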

\begin{proof}
    This is standard. We include the proof for completeness. Using the Weyl denominator formula, the inner product $\langle P_n(\lambda,t,u),Q_n(\mu,t,u)\rangle$ is
    \begin{equation}
    \begin{split}
        &\frac{1}{\# W_n\cdot W_\mu(t,u)}\mathrm{CT}\left(\sum_{w_1,w_2\in W_n}(-1)^{w_1}(-1)^{w_2}e^{w_1(\lambda+\rho_n)-w_2(\mu+\rho_n)}\prod_{\alpha\in R_n^+}\frac{1-t_\alpha e^{-w_1(\alpha)}}{1-t_\alpha e^{-w_2(\alpha)}}\right)\\
        &\qquad=\frac{1}{W_\mu(t,u)}\mathrm{CT}\left(\sum_{w\in W_n}(-1)^{w}e^{\mu+\rho_n-w(\lambda+\rho_n)}\frac{\prod_{\alpha\in R_n^+}1-t_\alpha e^{-\alpha}}{\prod_{\alpha\in R_n^+}1-t_\alpha e^{-w(\alpha)}}\right)\\
        &\qquad=\frac{1}{W_\mu(t,u)}\mathrm{CT}\left(\sum_{w\in W_n}e^{\mu-w(\lambda)}\prod_{\substack{\alpha\in R_n^+\\w^{-1}(\alpha)\in R_n^-}}\frac{t_\alpha-e^\alpha}{1-t_\alpha e^{\alpha}}\right).
    \end{split}
    \end{equation}
        This can only have a constant term when $w(\lambda)-\mu$ is a sum of roots in $R_n^+\cap w(R_n^-),$ but then $w(\lambda)$ is dominant and thus $w(\lambda)=\lambda.$ Finally, for any $\alpha\in R_n^+\cap w(R_n^-)$ we have $\langle\lambda,\alpha^\vee\rangle=0$ and $\langle\mu,\alpha^\vee\rangle\ge0,$ but the condition on $\lambda-\mu$ implies $\langle\lambda-\mu,\alpha^\vee\rangle\ge0.$ Thus we conclude $\lambda=\mu$ and the claim follows.
\end{proof}

We consider the following straightening law for the functions $Q_n(\mu,t,u).$
\begin{proposition}\label{strlaw}
    Let $\beta\in \Delta^+_n$ be a positive simple root and $s_\beta\colon \Lambda_n\to\Lambda_n$ the associated reflection. Then for $\mu\in\Lambda_n$ we have
    \begin{equation}
        Q_n(\mu,t,u)+Q_n(s_\beta(\mu)-\beta,t,u)=t_\beta\left(Q_n(\mu+\beta,t,u)+Q_n(s_\beta(\mu),t,u)\right).
    \end{equation}
\end{proposition}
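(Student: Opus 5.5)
The idea is to rewrite both sides as a single application of the antisymmetrizer $J_n$, and to show that the element being antisymmetrized has the form $X+s_\beta(X)$. Since $J_n(s_\beta X)=-J_n(X)$, such an element is killed by $J_n$.

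First, the normalization $1/W_\mu$ cancels in $Q_n$. So for every $\mu\in\Lambda_n$, including non-dominant ones,
\begin{equation}
Q_n(\mu,t,u)=\frac{J_n\bigl(e^{\mu}F\,(1-t_\beta e^{-\beta})\bigr)}{J_n(e^{\rho_n})},\qquad F\defeq e^{\rho_n}\prod_{\alpha\in R_n^+\setminus\{\beta\}}(1-t_\alpha e^{-\alpha}).
\end{equation}
Since the denominator is common to all four terms, it suffices to show that $J_n$ annihilates
\begin{equation}
e^{\mu}F(1-t_\beta e^{-\beta})+e^{s_\beta\mu-\beta}F(1-t_\beta e^{-\beta})-t_\beta e^{\mu+\beta}F(1-t_\beta e^{-\beta})-t_\beta e^{s_\beta\mu}F(1-t_\beta e^{-\beta}).
\end{equation}

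Next I record how $s_\beta$ acts on $F$. Because $\beta$ is simple, $s_\beta$ permutes $R_n^+\setminus\{\beta\}$, and it preserves root lengths, so $t_{s_\beta\alpha}=t_\alpha$. Hence the product over $R_n^+\setminus\{\beta\}$ is $s_\beta$-invariant. We also have $s_\beta\rho_n=\rho_n-\beta$, since $\langle\rho_n,\beta^\vee\rangle=1$ for simple $\beta$. Together these give $s_\beta(F)=e^{-\beta}F$.

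Now group the four terms in pairs. Writing $H\defeq(1-t_\beta e^{\beta})(1-t_\beta e^{-\beta})$, which is visibly $s_\beta$-invariant, a direct factorization gives
\begin{align}
e^{\mu}F(1-t_\beta e^{-\beta})-t_\beta e^{\mu+\beta}F(1-t_\beta e^{-\beta})&=e^{\mu}FH,\\
e^{s_\beta\mu-\beta}F(1-t_\beta e^{-\beta})-t_\beta e^{s_\beta\mu}F(1-t_\beta e^{-\beta})&=e^{s_\beta\mu-\beta}FH.
\end{align}
So the whole expression equals $X+Y$ with $X=e^{\mu}FH$ and $Y=e^{s_\beta\mu-\beta}FH$.

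Finally, $s_\beta(X)=e^{s_\beta\mu}\,e^{-\beta}F\,H=Y$. Since $J_n(s_\beta X)=(-1)^{s_\beta}J_n(X)=-J_n(X)$, we get $J_n(X+Y)=0$, which is exactly the straightening law.

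The only step requiring care is the identity $s_\beta(F)=e^{-\beta}F$. It uses both that $\beta$ is simple, so that $s_\beta$ stabilizes $R_n^+\setminus\{\beta\}$, and that the weights $t_\alpha$ depend only on root length. The rest is the bookkeeping of the two factorizations above.
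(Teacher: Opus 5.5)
Your proof is correct and is essentially the paper's argument. The paper also writes $Q_n(\mu,t,u)-t_\beta Q_n(\mu+\beta,t,u)$ as a single $J_n$ of $e^{\mu+\rho_n}(1-t_\beta e^{\beta})(1-t_\beta e^{-\beta})\prod_{\alpha\in R_n^+\setminus\{\beta\}}(1-t_\alpha e^{-\alpha})$. It then flips the sign using the $s_\beta$-stability of $R_n^+\setminus\{\beta\}$, and matches the result with the other pair via $s_\beta(\mu+\rho_n)=(s_\beta(\mu)-\beta)+\rho_n$. Your phrasing as $J_n(X+s_\beta X)=0$ is the same computation, and you usefully make explicit the facts $t_{s_\beta\alpha}=t_\alpha$ and $\langle\rho_n,\beta^\vee\rangle=1$, which the paper uses silently.
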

\begin{proof}
    We have
    \begin{equation}
        Q_n(\mu,t,u)-t_\beta Q_n(\mu+\beta,t,u)=J_n\left(e^{\mu+\rho_n}(1-t_\beta e^\beta)(1-t_\beta e^{-\beta})\prod_{\alpha\in R^+_n\setminus\{\beta\}}(1-t_\alpha e^{-\alpha})\right)/J_n(e^{\rho_n})
    \end{equation}
    and since $R_n^+\setminus\{\beta\}$ is preserved by $s_\beta,$ this is the same as
    \begin{equation}
        -J_n\left(e^{s_\beta(\mu+\rho_n)}(1-t_\beta e^\beta)(1-t_\beta e^{-\beta})\prod_{\alpha\in R^+_n\setminus\{\beta\}}(1-t_\alpha e^{-\alpha})\right)/J_n(e^{\rho_n}).
    \end{equation}
    Since $s_\beta(\mu+\rho_n)=(s_\beta(\mu)-\beta)+\rho_n,$ this is also seen to be
    \begin{equation}
        -(Q_n(s_\beta(\mu)-\beta,t,u)-t_\beta Q_n(s_\beta(\mu),t,u)).
    \end{equation}
    The claim follows.
\end{proof}

\begin{definition}
    We consider the straightening function $\mathrm{str}\colon\Z[t,u][\Lambda_n]\to\Z[t,u][\Lambda^+_n]$ given by
    \begin{equation}
        \mathrm{str}(e^\mu)=\sum_{\lambda\in\Lambda^+_n}e^\lambda\langle P_n(\lambda,t,u),Q_n(\mu,t,u)\rangle
    \end{equation}
    and extended linearly. For $f,g\in\Z[t,u][\Lambda_n],$ we denote $f\equiv g$ if $\mathrm{str}(f-g)=0.$
\end{definition}
Because of \Cref{PQ=1} and \Cref{strlaw}, this means that $\mathrm{str}(e^\mu)=\sum_{\lambda\in\Lambda^+_n} a_{\lambda,\mu}e^\lambda$ for coefficients $a_{\lambda,\mu}\in\Z[t,u]$ which are such that
\begin{equation}
    Q_n(\mu,t,u)=\sum_{\lambda\in\Lambda^+_n}a_{\lambda,\mu}Q_n(\lambda,t,u).
\end{equation}

We now explain how the straightening relations allow us to obtain nontrivial recurrences for the Kostka--Foulkes polynomials, which we will use to prove \Cref{KFthm}.
\begin{proposition}\label{LusztigKatoCor}
    For $\lambda,\mu\in\Lambda^+_n,$ we have $K^{C_n}_{\lambda,\mu}(t,u)=\langle\chi_n(\lambda),Q_n(\mu,t,u)\rangle.$
\end{proposition}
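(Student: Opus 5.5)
We plan to compute $\langle\chi_n(\lambda),Q_n(\mu,t,u)\rangle$ directly from the definition of the inner product, mirroring the computation in the proof of \Cref{PQ=1}. The point is that $\chi_n(\lambda)$ is $W_n$-invariant, while $Q_n(\mu,t,u)$ is given by a Weyl-type alternating sum. Substituting the definitions and using the Weyl denominator formula $J_n(e^{\rho_n})=e^{\rho_n}\prod_{\alpha\in R_n^+}(1-e^{-\alpha})$, the factor $\prod_{\alpha\in R_n}(1-e^\alpha)$ in the inner product is $J_n(e^{\rho_n})\overline{J_n(e^{\rho_n})}$ up to sign. This gives
\begin{equation}
\langle\chi_n(\lambda),Q_n(\mu,t,u)\rangle=\frac{1}{\#W_n}\mathrm{CT}\left(J_n(e^{\lambda+\rho_n})\cdot\overline{J_n\Big(e^{\mu+\rho_n}\prod_{\alpha\in R_n^+}(1-t_\alpha e^{-\alpha})\Big)}\cdot\prod_{\alpha\in R_n}\frac{1}{1-t_\alpha e^{\alpha}}\right),
\end{equation}
where the denominators of $\chi_n$ and $Q_n$ have cancelled against $\prod_{\alpha\in R_n}(1-e^\alpha)$. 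Signs need a little care but are routine.

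Next we expand the conjugated alternating sum as $\sum_{w}(-1)^w e^{-w(\mu+\rho_n)}\prod_{\alpha\in R^+_n}(1-t_\alpha e^{w\alpha})$. The product over all $R_n$ is $W_n$-invariant, as is $J_n(e^{\lambda+\rho_n})$ up to the sign $(-1)^w$. Since $\mathrm{CT}$ is $W_n$-invariant, we substitute $w^{-1}$ in each summand, so that the $w$-sum contributes $\#W_n$ identical terms. After this, the numerator factor $\prod_{\alpha\in R_n^+}(1-t_\alpha e^{\alpha})$ cancels half of the denominator, leaving
\begin{equation}
\mathrm{CT}\left(J_n(e^{\lambda+\rho_n})\,e^{-(\mu+\rho_n)}\prod_{\alpha\in R_n^+}\frac{1}{1-t_\alpha e^{-\alpha}}\right).
\end{equation}
We must be careful about which half cancels, i.e.\ whether we end up with $e^{\alpha}$ or $e^{-\alpha}$ in the remaining product. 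If the wrong half appears, we apply the involution $\bar{\cdot}$ together with $w_0=-1$, which lies in $W_n$ in type $C_n$ and has determinant $(-1)^n$, consistently to both factors.

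Finally, we expand $\prod_{\alpha\in R_n^+}(1-t_\alpha e^{\alpha})^{-1}=\sum_\nu\mathcal{P}_n(\nu,t,u)e^\nu$, with the orientation arranged as above. The constant term of $\sum_w(-1)^we^{w(\lambda+\rho_n)-(\mu+\rho_n)}\sum_\nu\mathcal P_n(\nu)e^{-\nu}$ is then $\sum_w(-1)^w\mathcal{P}_n(w(\lambda+\rho_n)-(\mu+\rho_n),t,u)$, which is exactly $K^{C_n}_{\lambda,\mu}(t,u)$.

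The main obstacle is the bookkeeping of the middle step: making the cancellation of half of $\prod_{R_n}(1-t_\alpha e^\alpha)^{-1}$ land on the correct sign of roots, and tracking the $(-1)^w$ signs. Both are handled by the symmetry $w_0=-1$. A secondary point is that all manipulations take place in $\Q[\Lambda_n]\llbracket t,u\rrbracket$, where the constant term and the $W_n$-action are well defined termwise, so no convergence issues arise.
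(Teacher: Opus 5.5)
Your proposal is correct and is essentially the paper's proof. You cancel the Weyl denominators against $\prod_{\alpha\in R_n}(1-e^\alpha)=J_n(e^{\rho_n})\overline{J_n(e^{\rho_n})}$, which holds exactly with no sign; then you expand the alternating sums and collapse the $W_n$-sum using $W_n$-invariance of $\mathrm{CT}$, which is precisely the paper's ``expanding both $J_n$ terms and combining.'' Your orientation worry is also unfounded: substituting $w^{-1}$ puts $\prod_{\alpha\in R_n^+}(1-t_\alpha e^{\alpha})$ in the numerator, which cancels the positive half of the denominator and leaves $\prod_{\alpha\in R_n^+}(1-t_\alpha e^{-\alpha})^{-1}$ directly, as in your display, so the fallback via $w_0=-1$ is never needed.
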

\begin{proof}
    This is standard. See \cite{Kato} for example. We include a proof for completeness. By the Weyl denominator formula, we have
    \begin{equation}
        \langle\chi_n(\lambda),Q_n(\mu,t,u)\rangle=\frac{1}{\# W_n}\mathrm{CT}\left(\frac{J_n(e^{\lambda+\rho_n})J_n(e^{-\mu-\rho_n}\prod_{\alpha\in R_n^+}(1-t_\alpha e^{\alpha}))}{\prod_{\alpha\in R_n}(1-t_\alpha e^\alpha)}\right).
    \end{equation}
    Expanding both $J_n$ terms and combining, this is
    \begin{equation}
        \langle\chi_n(\lambda),Q_n(\mu,t,u)\rangle=\mathrm{CT}\left(\sum_{w\in W_n}(-1)^w e^{w(\lambda+\rho_n)-(\mu+\rho_n)}\prod_{\alpha\in R_n^+}(1-t_\alpha e^{-\alpha})^{-1}\right),
    \end{equation}
    which is the definition of $K^{C_n}_{\lambda,\mu}(t,u).$
\end{proof}
\begin{theorem}[Lusztig--Kato formula]\label{LusztigKato}
    If $\lambda\in\Lambda^+_n,$ we have
    \begin{equation}
        \chi_n(\lambda)=\sum_{\substack{\mu\le\lambda\\\mu\in\Lambda_n^+}}P_n(\mu,t,u)K^{C_n}_{\lambda,\mu}(t,u).
    \end{equation}
\end{theorem}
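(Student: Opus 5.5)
The plan is to expand $\chi_n(\lambda)$ in the basis $\{P_n(\mu,t,u)\}_{\mu\in\Lambda^+_n}$ and then read off the coefficients with the pairing of \Cref{PQ=1}; \Cref{LusztigKatoCor} identifies these coefficients as Kostka--Foulkes polynomials.

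First I show that the $P_n(\mu,t,u)$, $\mu\in\Lambda^+_n$, form a triangular basis of the $W_n$-invariants $\Z[t,u][\Lambda_n]^{W_n}$ (after inverting $W_\mu(t,u)$, or working over $\Q(t,u)$). Expanding $\prod_{\alpha\in R_n^+}(1-t_\alpha e^{-\alpha})$ as $\sum_\nu c_\nu e^{-\nu}$ with $\nu\ge0$ gives
\begin{equation}
J_n\Bigl(e^{\mu+\rho_n}\prod_{\alpha\in R^+_n}(1-t_\alpha e^{-\alpha})\Bigr)/J_n(e^{\rho_n})=\sum_\nu c_\nu\chi_n(\mu-\nu).
\end{equation}
Each $\chi_n(\mu-\nu)$ is either $0$ or $\pm\chi_n(\kappa)$ for a dominant $\kappa$, where $\kappa+\rho_n$ is the dominant $W_n$-conjugate of $\mu-\nu+\rho_n$. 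In the nonzero case $\kappa+\rho_n\ge\mu-\nu+\rho_n$, and $\kappa+\rho_n\le\mu+\rho_n$ because $\kappa+\rho_n$ is a weight of $\chi_n(\mu)$-type expressions (standard dominance argument). So $P_n(\mu)=\chi_n(\mu)+\sum_{\kappa<\mu}(\ast)\chi_n(\kappa)$ up to the leading coefficient. That leading coefficient is $\frac{1}{W_\mu}\sum_{w\in \mathrm{Stab}(\mu)}\prod t_\alpha=1$ by the usual Macdonald computation of the $\nu$ with $\mu-\nu+\rho_n\in W_n(\mu+\rho_n)$.

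The main obstacle is the bookkeeping that this leading coefficient is exactly $1$. If I prefer to avoid it, I can argue through \Cref{PQ=1} instead: since $\langle P_n(\lambda),Q_n(\mu)\rangle=\delta_{\lambda\mu}$, the $P_n(\lambda)$ are linearly independent. Triangularity with respect to $\chi_n$ (with some nonzero diagonal) already shows they span the finite set of invariants supported on $\{\kappa\le\lambda\}$, so they form a basis.

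Given the basis, write
\begin{equation}
\chi_n(\lambda)=\sum_{\mu\in\Lambda^+_n}c_\mu P_n(\mu,t,u),
\end{equation}
where the sum runs over $\mu\le\lambda$ by triangularity. Pairing both sides with $Q_n(\mu,t,u)$ and using bilinearity of $\langle\cdot,\cdot\rangle$ together with \Cref{PQ=1} gives $c_\mu=\langle\chi_n(\lambda),Q_n(\mu,t,u)\rangle$. By \Cref{LusztigKatoCor} this equals $K^{C_n}_{\lambda,\mu}(t,u)$. The restriction $\mu\le\lambda$ is also consistent with the remark that $K^{C_n}_{\lambda,\mu}$ vanishes unless $\lambda\ge\mu$.
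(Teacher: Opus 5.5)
Your proposal is correct and follows the paper's proof: expand $\chi_n(\lambda)$ in the basis $\{P_n(\mu,t,u)\}_{\mu\in\Lambda_n^+}$, read off the coefficients by pairing with $Q_n(\mu,t,u)$ via \Cref{PQ=1}, and identify them with $K^{C_n}_{\lambda,\mu}(t,u)$ via \Cref{LusztigKatoCor}. The paper simply asserts that the $P_n(\mu,t,u)$ form a basis, whereas you also sketch why. Your cleaner version of that sketch is the one that needs no diagonal computation: the $P_n(\kappa)$ with $\kappa\le\lambda$ lie in the span of the $\chi_n(\kappa)$ with $\kappa\le\lambda$, they are linearly independent by \Cref{PQ=1}, and a dimension count finishes; this also yields the restriction to $\mu\le\lambda$.
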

\begin{proof}
    This follows at once from \Cref{PQ=1} and \Cref{LusztigKatoCor}, since the $P_n(\mu,t,u)$ for $\mu\in\Lambda_n^+$ are a basis of the space of symmetric functions.
\end{proof}

\begin{lemma}\label{KFrec}
    Consider $\lambda\in \Lambda^+_n$ and $\nu\in\Lambda_n.$ Denote $L_n=\prod_{\alpha\in R^{+,l}_n}(1-ue^\alpha).$ Then we have that
    \begin{equation}
        \sum_{\mu\in\Lambda^+_n}K^{C_n}_{\lambda,\mu}(t,u)\cdot [e^\mu](\mathrm{str}(L_ne^\nu))\in \Z[t].
    \end{equation}
    Similarly, denote $S_n=\prod_{\alpha\in R^{+,s}_n}(1-te^\alpha).$ Then we have that
    \begin{equation}
        \sum_{\mu\in\Lambda^+_n}K^{C_n}_{\lambda,\mu}(t,u)\cdot [e^\mu](\mathrm{str}(S_ne^\nu))\in \Z[u].
    \end{equation}
\end{lemma}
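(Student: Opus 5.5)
The plan is to turn the left-hand side into a single inner product $\langle\chi_n(\lambda),Q_n(f,t,u)\rangle$ with $f=L_ne^\nu$, using \Cref{LusztigKatoCor} and linearity, and then to evaluate that inner product as a constant term in which the long-root factors cancel.

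First I extend $Q_n$ linearly in the weight argument. For $f=\sum_\nu c_\nu e^\nu\in\Z[t,u][\Lambda_n]$, set
\begin{equation}
    Q_n(f,t,u)\defeq\sum_\nu c_\nu Q_n(\nu,t,u)=\frac{J_n\left(e^{\rho_n}f\prod_{\alpha\in R_n^+}(1-t_\alpha e^{-\alpha})\right)}{J_n(e^{\rho_n})}.
\end{equation}
The second equality holds because $Q_n(\nu,t,u)=W_\nu(t,u)P_n(\nu,t,u)$, so the factor $W_\nu$ cancels. By the remark after the definition of $\mathrm{str}$ (which rests on \Cref{PQ=1} and \Cref{strlaw}), we have $Q_n(f,t,u)=\sum_{\mu\in\Lambda_n^+}[e^\mu](\mathrm{str}(f))\,Q_n(\mu,t,u)$. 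Pairing with $\chi_n(\lambda)$ and applying \Cref{LusztigKatoCor} termwise gives
\begin{equation}
    \sum_{\mu\in\Lambda^+_n}K^{C_n}_{\lambda,\mu}(t,u)\,[e^\mu](\mathrm{str}(f))=\langle\chi_n(\lambda),Q_n(f,t,u)\rangle .
\end{equation}
This uses only that the inner product is linear in its second argument over $\Z[t,u]$; the bar involution acts only on $e^\mu$ and not on $t,u$.

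Next I repeat the computation in the proof of \Cref{LusztigKatoCor} with $e^{-\mu-\rho_n}$ replaced by $e^{-\rho_n}\bar f$. The factor $\prod_{\alpha\in R_n}(1-t_\alpha e^\alpha)$ is $W_n$-invariant and $J_n(e^{\lambda+\rho_n})$ is anti-invariant. Unfolding $J_n(e^{-\rho_n}\bar f\prod_{\alpha>0}(1-t_\alpha e^{\alpha}))$ and using that $\mathrm{CT}$ is $W_n$-invariant, the $1/\#W_n$ is absorbed and we get
\begin{equation}
    \langle\chi_n(\lambda),Q_n(f,t,u)\rangle=\mathrm{CT}\left(J_n(e^{\lambda+\rho_n})\,e^{-\rho_n}\,\bar f\prod_{\alpha\in R_n^+}(1-t_\alpha e^{-\alpha})^{-1}\right).
\end{equation}

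Now take $f=L_ne^\nu$. Then $\bar f=e^{-\nu}\prod_{\alpha\in R_n^{+,l}}(1-ue^{-\alpha})$, which cancels exactly the long-root factors of the denominator. What remains is
\begin{equation}
    \mathrm{CT}\left(\sum_{w\in W_n}(-1)^w e^{w(\lambda+\rho_n)-\rho_n-\nu}\prod_{\alpha\in R_n^{+,s}}(1-te^{-\alpha})^{-1}\right).
\end{equation}
This involves no $u$. Each constant term is a short-root partition function coefficient, which is a polynomial in $t$ with integer coefficients, so the expression lies in $\Z[t]$. The case $f=S_ne^\nu$ is identical: the short factors cancel and only $\prod_{\alpha\in R_n^{+,l}}(1-ue^{-\alpha})^{-1}$ survives, giving an element of $\Z[u]$.

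The main point needing care is the unfolding step: checking that the $W_n$-averaging in the proof of \Cref{LusztigKatoCor} still works once $\bar f$ is inserted, which is not $W_n$-invariant. This is fine because only the single product $J_n(\cdot)$ containing $\bar f$ is unfolded, and $J_n(e^{\lambda+\rho_n})$ together with the symmetric denominator absorbs the sign and the Weyl group action. One also has to check that all manipulations are valid in the completion $\Q[\Lambda_n]\llbracket t,u\rrbracket$ where $\mathrm{CT}$ is defined; this holds because every expansion there is in nonnegative powers of $t$ and $u$.
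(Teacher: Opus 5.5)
Your proof is correct and is essentially the paper's argument: both identify the sum with $\langle\chi_n(\lambda),Q_n(L_ne^\nu,t,u)\rangle$ and then cancel the long-root factors. The difference is where the cancellation happens. The paper pulls the $W_n$-invariant product $\prod_{\alpha\in R_n^l}(1-ue^\alpha)$ out of $J_n$ and cancels it against the weight of the inner product. You instead unfold to the constant-term form of \Cref{LusztigKatoCor} and cancel $\overline{L_n}$ inside the partition-function series. This has the mild bonus of exhibiting the sum directly as $\sum_{w\in W_n}(-1)^w\mathcal{P}_n(w(\lambda+\rho_n)-(\nu+\rho_n),t,0)\in\Z[t]$.
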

\begin{proof}
    Note that
    \begin{equation}
        \sum_{\mu\in\Lambda^+_n}Q_n(\mu,t,u)\cdot [e^\mu](\mathrm{str}(L_ne^\nu))=\sum_{\mu\in\Lambda_n}Q_n(\mu,t,u)\cdot[e^\mu](L_ne^\nu)
    \end{equation}
    and hence this is
    \begin{equation}
    \begin{split}
    \frac{J_n\left(L_ne^\nu e^{\rho_n}\prod_{\alpha\in R^{+}_n}(1-t_\alpha e^{-\alpha})\right)}{J_n(e^{\rho_n})}&=\frac{J_n\left(e^{\nu+\rho_n}\prod_{\alpha\in R^{+,s}_n}(1-te^{-\alpha}) \prod_{\alpha\in R^{+,l}_n}(1-ue^{-\alpha})(1-ue^\alpha)\right)}{J_n(e^{\rho_n})}\\
    &=\frac{J_n\left(e^{\nu+\rho_n}\prod_{\alpha\in R^{+,s}_n}(1-te^{-\alpha}) \right)}{J_n(e^{\rho_n})}\cdot \prod_{\alpha\in R^{l}_n}(1-ue^{\alpha}).
    \end{split}
    \end{equation}
    Thus the first claim follows from \Cref{LusztigKatoCor} by pairing with $\chi_n(\lambda)$: the expression $\sum_{\mu\in\Lambda^+_n}K^{C_n}_{\lambda,\mu}(t,u)\cdot [e^\mu](\mathrm{str}(L_ne^\nu))$ is equal to
    \begin{equation}
        \frac{1}{\# W_n}\mathrm{CT}\left(\overline{\chi_n(\lambda)}\cdot \frac{J_n\left(e^{\nu+\rho_n}\prod_{\alpha\in R^{+,s}_n}(1-te^{-\alpha}) \right)}{J_n(e^{\rho_n})}\cdot \prod_{\alpha\in R^{l}_n}(1-ue^{\alpha})\cdot\prod_{\alpha\in R_n}\frac{1-e^\alpha}{1-t_\alpha e^\alpha}\right)
    \end{equation}
    which is
    \begin{equation}
        \frac{1}{\# W_n}\mathrm{CT}\left(\overline{\chi_n(\lambda)}\cdot \frac{J_n\left(e^{\nu+\rho_n}\prod_{\alpha\in R^{+,s}_n}(1-te^{-\alpha}) \right)}{J_n(e^{\rho_n})}\cdot\frac{\prod_{\alpha\in R_n}(1-e^\alpha)}{\prod_{\alpha\in R_n^s}(1-t e^\alpha)}\right)\in\Q\llbracket t\rrbracket.
    \end{equation}
    
    The second claim follows in the same way.
\end{proof}
The proof of \Cref{KFthm} will be done by induction by repeatedly applying this lemma for different choices of $\lambda$ and for $\nu=0.$ For this, we will need to perform several computations with the straightening laws.

\subsection{Straightening computations}
\begin{definition}
    We consider the concatenation product $\star\colon\Lambda_n\times\Lambda_m\to\Lambda_{n+m},$ and we extend it to $\star\colon \Z[ t,u][\Lambda_n]\times\Z[t,u][\Lambda_m]\to \Z[t,u][\Lambda_{n+m}]$.
\end{definition}
\begin{corollary}\label{strsmall}
    For any $\delta_1\in\Lambda_n$ and $\delta_2\in\Lambda_m$ and $r\ge1,$ we have
    \begin{equation}
        \delta_1\star e^{(0^r,1)}\star\delta_2\equiv\delta_1\star\left(t^re^{(1,0^r)}\right)\star\delta_2
    \end{equation}
    as well as
    \begin{equation}
        \delta_1\star e^{(0^r,2)}\star\delta_2\equiv \delta_1\star \left(t^{r-1}(t^r-1)e^{(1^2,0^{r-1})}+t^re^{(2,0^r)})\right)\star\delta_2.
    \end{equation}
\end{corollary}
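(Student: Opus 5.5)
The plan is to derive both relations from \Cref{strlaw}, applied only with short simple roots $\beta=e_i-e_{i+1}$, for which $t_\beta=t$. Because $\mathrm{str}(e^\mu)$ records the expansion of $Q_n(\mu,t,u)$ in the $Q_n(\lambda,t,u)$ with $\lambda$ dominant, \Cref{strlaw} gives, for every $\mu$ in the ambient weight lattice,
\begin{equation}
e^\mu+e^{s_\beta(\mu)-\beta}\equiv t\left(e^{\mu+\beta}+e^{s_\beta(\mu)}\right).
\end{equation}
This relation only changes coordinates $i$ and $i+1$. So it holds with arbitrary fixed entries $\delta_1$ before and $\delta_2$ after the block, and by linearity it extends to arbitrary elements $\delta_1,\delta_2$ of the group rings. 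I will therefore work with two-letter blocks and pad them using $\star$.

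\textbf{Base cases ($r=1$).} Take $\mu=(0,1)$ in positions $i,i+1$. Then $s_\beta(\mu)=(1,0)$, $s_\beta(\mu)-\beta=(0,1)$ and $\mu+\beta=(1,0)$, so the relation reads $2e^{(0,1)}\equiv 2te^{(1,0)}$. Since $\mathrm{str}$ takes values in the free, hence torsion-free, $\Z[t,u]$-module $\Z[t,u][\Lambda^+]$, we may cancel the $2$. Next take $\mu=(0,2)$. Then $s_\beta(\mu)=(2,0)$ and $s_\beta(\mu)-\beta=\mu+\beta=(1,1)$, so $e^{(0,2)}+e^{(1,1)}\equiv t(e^{(1,1)}+e^{(2,0)})$. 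This gives $e^{(0,2)}\equiv(t-1)e^{(1,1)}+te^{(2,0)}$, which is the claim for $r=1$.

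\textbf{First identity.} I induct on $r$. Write $e^{(0^r,1)}=e^{(0^{r-1})}\star e^{(0,1)}$ and apply the base case with $\delta_1$ extended by $0^{r-1}$; this gives $t\,e^{(0^{r-1},1,0)}$. Then apply the induction hypothesis with the trailing $0$ absorbed into $\delta_2$, which gives $t^r e^{(1,0^r)}$. Note that $\delta_1$ and $\delta_2$ are arbitrary, and may in particular contain other nonzero entries.

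\textbf{Second identity.} Again I induct on $r$. The base case applied to the last two coordinates gives
\begin{equation}
e^{(0^r,2)}\equiv(t-1)e^{(0^{r-1},1,1)}+t\,e^{(0^{r-1},2,0)}.
\end{equation}
For the second term, the induction hypothesis (with the final $0$ in $\delta_2$) gives
\begin{equation}
t\left(t^{r-2}(t^{r-1}-1)e^{(1^2,0^{r-1})}+t^{r-1}e^{(2,0^r)}\right).
\end{equation}
For the first term, I use the first identity twice. Moving the $1$ in position $r$ to position $1$ costs $t^{r-1}$. Then, with $\delta_1=(1)$, moving the $1$ in position $r+1$ to position $2$ costs another $t^{r-1}$. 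Hence $(t-1)e^{(0^{r-1},1,1)}\equiv(t-1)t^{2r-2}e^{(1^2,0^{r-1})}$. Collecting coefficients of $e^{(1^2,0^{r-1})}$ gives
\begin{equation}
(t-1)t^{2r-2}+t^{2r-2}-t^{r-1}=t^{r-1}(t^r-1),
\end{equation}
and the coefficient of $e^{(2,0^r)}$ is $t^r$, as required. The main point needing care is this bookkeeping of exponents, together with the observation that the first identity may be applied with nonzero entries inside $\delta_1$.
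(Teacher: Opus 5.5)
Your proof is correct and follows the same route as the paper: the $r=1$ cases come directly from \Cref{strlaw} with a short simple root $e_i-e_{i+1}$, and the general case follows by induction on $r$. The paper only sketches that induction in one line; you carry it out explicitly, including the harmless cancellation of the factor $2$ and the use of the first identity with nonzero entries in the padding, and the exponent bookkeeping checks out.
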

\begin{proof}
    The case $r=1$ follows immediately from \Cref{strlaw}, and the general case follows easily from induction on $r.$
\end{proof}
The following lemma will allow us to compute $\mathrm{str}(L_n).$
\begin{lemma}\label{strComputationL}
    Denote
    \begin{equation}
        \delta_{a,n}\defeq\sum_{\substack{\lambda\in\{0,2\}^n\subseteq \Lambda_n\\\lvert\lambda\vert=2a}}e^\lambda.
    \end{equation}
    Then
    \begin{equation}
        [e^{\omega_{2a}}]\mathrm{str}(\delta_{a,n})=(-1)^a\qbinom{n}{2a}{t}(t;t^2)_a.
    \end{equation}
\end{lemma}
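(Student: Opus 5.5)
The plan is to compute the coefficient of $e^{\omega_{2a}}$ in $\mathrm{str}(\delta_{a,n})$ one monomial at a time. For each $\lambda\in\{0,2\}^n$ with $\lvert\lambda\rvert=2a$, I straighten $e^\lambda$ by moving its $2$'s to the left using \Cref{strsmall}. I then show that only one branch of this process can produce $e^{\omega_{2a}}$. This turns $[e^{\omega_{2a}}]\mathrm{str}(\delta_{a,n})$ into an explicit sum over subsets, which I then evaluate as a $t$-identity.

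\textbf{Step 1 (straightening).} Write $\lambda=(0^{r_1},2,0^{r_2},2,\ldots,0^{r_a},2,0^{s})$. Take the first block $(0^{r_1},2)$, with $\delta_1$ empty and $\delta_2$ the remainder, and apply the second relation of \Cref{strsmall}. It gives
\begin{equation*}
t^{r_1-1}(t^{r_1}-1)\,e^{(1^2,0^{r_1-1})}\star e^{\delta_2}+t^{r_1}\,e^{(2,0^{r_1})}\star e^{\delta_2}.
\end{equation*}
In the first branch the prefix is $(1^2)$ followed by zeros. I then treat the next block $(0^{r_1-1+r_2},2)$ with $\delta_1=(1^2)$ and iterate. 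If I follow the ``$1^2$'' branch at every step, the $j$-th $2$ sees $m_j=\#\{\text{zeros to its left}\}-(j-1)$ zeros in front of it. This branch contributes $\prod_{j}t^{m_j-1}(t^{m_j}-1)\,e^{\omega_{2a}}$. A term with $m_j=0$ vanishes automatically, because then $t^{m_j}-1=0$.

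\textbf{Step 2 (main obstacle: vanishing of the other branches).} I must show that every monomial in which some $2$ survives, say of the form $e^{(1^{2j},2,0^{r})}\star(\cdots)$, contributes nothing to the coefficient of $e^{\omega_{2a}}$. I expect this to be the hardest step. I would try first a triangularity argument. By \Cref{PQ=1} and the definition of $Q_n$, $\mathrm{str}(e^\mu)$ is supported on dominant $\lambda$ with $\lambda+\rho_n\le(\mu+\rho_n)^+$; here $(\cdot)^+$ denotes the dominant $W_n$-conjugate. I would then check, using the relation $Q_n(1,2)=tQ_n(2,1)$ obtained from \Cref{strlaw}, that once a $2$ sits immediately after a run of $1$'s at the front, every further straightening step keeps an entry $\ge2$. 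The alternative branch of \Cref{strsmall} only ever produces $(2,0^r)$, never lowering that entry. If this fails, the fallback is an induction on $n$ via $\delta_{a,n}=e^{(0)}\star\delta_{a,n-1}+e^{(2)}\star\delta_{a-1,n-1}$. The straightening relations for the simple roots $e_i-e_{i+1}$ with $i\ge2$, together with $2e_n$, are those of $\Lambda_{n-1}$ acting on the tail. Hence I can straighten the tail first and then only need to control prefixes $(0,\mu)$ and $(2,\mu)$ with $\mu$ dominant; the term $(2,\mu)$ is dominant and $\neq\omega_{2a}$, so it drops out.

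\textbf{Step 3 (the $t$-identity).} I need $\sum\prod_j t^{m_j-1}(t^{m_j}-1)=(-1)^a\qbinom{n}{2a}{t}(t;t^2)_a$, where the sum is over position sets of the $a$ twos. Let $c_{a,n}$ denote the left-hand side. I prove the identity by induction on $n$, splitting according to whether the last coordinate of $\lambda$ is $0$ or $2$. If the last coordinate is $2$, that twos sees $m=n-1-2(a-1)=n-2a+1$ zeros. This gives
\begin{equation*}
c_{a,n}=c_{a,n-1}+t^{n-2a}(t^{n-2a+1}-1)\,c_{a-1,n-1},
\end{equation*}
and it remains to check that the right-hand side of the lemma satisfies the same recurrence. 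This is a routine $t$-binomial manipulation: factor out $(-1)^a(t;t^2)_{a-1}\qbinom{n-1}{2a-2}{t}$ and compare. As a sanity check, $a=1$ gives $t^{-1}([n]_{t^2}-[n]_t)=-\frac{1-t}{1+t}[n]_t[n-1]_t$, which agrees with the lemma for small $n$.
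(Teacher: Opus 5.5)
Your proof is correct and is essentially the paper's argument, just unrolled. The paper obtains the recurrence $w(a,n)=w(a,n-1)+t^{n-2a}(t^{n-2a+1}-1)\,w(a-1,n-1)$ directly at the level of $\mathrm{str}$: it splits off the last coordinate of $\delta_{a,n}$, applies \Cref{strsmall} to the final $2$, and then checks the same $t$-Pascal identity you check in Step 3. You instead first extract the explicit per-monomial weight $\prod_j t^{m_j-1}(t^{m_j}-1)$ and then split the resulting sum.

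Your branch-tracking observation in Step 2 is enough on its own. Once a $2$ survives, the only moves left are \Cref{strsmall} applied to untouched blocks and the swaps $Q_n(\ldots,1,2,\ldots)=tQ_n(\ldots,2,1,\ldots)$, so a $2$ persists. The paper uses exactly this fact, without saying so, to discard the terms $e^{(2^b,1^c,0^d)}$ with $b\ge1$ of $\mathrm{str}(\delta_{a-1,n-1})$.

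Two side remarks in Step 2 should be dropped. First, the support claim $\lambda+\rho_n\le(\mu+\rho_n)^+$ is false. In $C_2$ one has $\mathrm{str}(e^{(0,2)})=(t-1)e^{(1,1)}+te^{(2,0)}$, but $(2,0)+\rho_2=(4,1)\not\le(3,2)$. In any case, an upper triangularity bound cannot exclude $e^{\omega_{2a}}$, since it is the smallest dominant weight that occurs.

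Second, your fallback of splitting off the first coordinate would not close on $w(a,n)$ alone. For example, $e^{(0,2,1^{2a-2},0^{n-2a})}$ also contributes $(t-1)e^{\omega_{2a}}$, via $(0,2)\mapsto(t-1)(1,1)+t(2,0)$. Splitting off the last coordinate, as in your Step 3 and in the paper, is what makes the recursion close.
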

\begin{proof}
    Denote $w(a,n)=[e^{\omega_{2a}}]\mathrm{str}(\delta_{a,n}).$ We write
    \begin{equation}
        \delta_{a,n}=\delta_{a,n}^{(0)}+\delta_{a,n}^{(2)}
    \end{equation}
    where for $m\in\{0,2\}$ we define
    \begin{equation}
        \delta_{a,n}^{(m)}\defeq\sum_{\substack{\lambda\in\{0,2\}^n\subseteq\Lambda_n\\\lvert\lambda\rvert=2a\\\lambda_n=m}}e^\lambda.
    \end{equation}
    Then we have
    \begin{equation}
        \mathrm{str}(\delta_{a,n}^{(0)})=\mathrm{str}(\delta_{a,n-1})\star e^{(0)},\quad \delta_{a,n}^{(2)}\equiv \delta_{a-1,n-1}\star e^{(2)}
    \end{equation}
    and thus by \Cref{strsmall} we have the recurrence
    \begin{equation}
        w(a,n)=w(a,n-1)+t^{n-2a}(t^{n-2a+1}-1)\cdot w(a-1,n-1).
    \end{equation}
    This recurrence fully determines all $w(a,n)$ from the starting conditions $w(0,n)=1.$ Thus the proof of the lemma reduces to the identity
    \begin{equation}
        (-1)^a\qbinom{n}{2a}{t}(t;t^2)_a\isequal (-1)^a\qbinom{n-1}{2a}{t}(t;t^2)_a+t^{n-2a}(t^{n-2a+1}-1)(-1)^{a-1}\qbinom{n-1}{2a-2}{t}(t;t^2)_{a-1},
    \end{equation}
    which is straightforward.
\end{proof}

As for $\mathrm{str}(S_n),$ we need a corresponding lemma.
\begin{lemma}\label{strComputationS}
    Let $S_n$ be as in \Cref{KFrec}. Then we have
    \begin{equation}
        [e^{\omega_{2k}}](\mathrm{str}(S_n))\equiv(-1)^kt^{\binom{k+1}{2}}\qbinom{n-k}{k}{t}\mod u\Z[t,u].
    \end{equation}
\end{lemma}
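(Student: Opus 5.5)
Working modulo $u$, the straightening law \Cref{strlaw} is simplified for the long simple root $\beta=2e_n$, where $t_\beta=u\equiv0$. It gives $Q_n(\mu)+Q_n(s_\beta\mu-\beta)\equiv0$, i.e.\ $e^{(\ldots,m)}\equiv -e^{(\ldots,-m-2)}$ in the last coordinate. In particular $e^{(\ldots,-1)}\equiv0$ and $e^{(\ldots,-2)}\equiv -e^{(\ldots,0)}$. The short simple roots behave as in type $A$ with parameter $t$. I would expand
\begin{equation}
S_n=\prod_{i<j}(1-te^{e_i-e_j})(1-te^{e_i+e_j}),
\end{equation}
and organize the computation recursively in $n$, mirroring \Cref{strComputationL}. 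I would split off the factors involving the last coordinate:
\begin{equation}
S_n=(S_{n-1}\star e^{(0)})\cdot\prod_{i<n}(1-te^{e_i-e_n})(1-te^{e_i+e_n}).
\end{equation}
The last coordinate then takes the values $-m+p$, where $m,p$ count the chosen factors of each type. Since we only need the coefficient of $e^{\omega_{2k}}$, all weights have entries summing to $2k$ after straightening. It suffices to track terms whose last entry straightens to $0$ or $1$.

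Concretely, I would define $v(k,n)=[e^{\omega_{2k}}]\mathrm{str}(S_n)\bmod u$. The next step is to find a recurrence of the form
\begin{equation}
v(k,n)=v(k,n-1)+c_{k,n}(t)\,v(k-1,n-2).
\end{equation}
This is the natural shape, given that the target $(-1)^kt^{\binom{k+1}{2}}\qbinom{n-k}{k}{t}$ satisfies the $q$-Pascal rule
\begin{equation}
\qbinom{n-k}{k}{t}=\qbinom{n-1-k}{k}{t}+t^{n-2k}\qbinom{n-1-k}{k-1}{t}.
\end{equation}
This suggests $c_{k,n}=-t^{n-k}$ once the power $t^{\binom{k+1}{2}}/t^{\binom{k}{2}}=t^k$ is absorbed. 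To derive it, I would group the product over $i<n$ pairwise. Using the reflection rule in the last coordinate, the contributions with last entry $-1$ vanish. Those with last entry $\pm2$ or $-2$ reduce to last entry $0$ or to a pair of $1$'s, via \Cref{strsmall} and the $u\equiv0$ reflection. The sign $-1$ then comes from $e^{(\ldots,-2)}\equiv-e^{(\ldots,0)}$.

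An alternative route that may be cleaner is to first straighten the type-$A$ part. I would write $S_n=A_n\cdot B_n$ with $A_n=\prod_{i<j}(1-te^{e_i-e_j})$. Then $A_n e^\nu$ straightens under the type-$A_{n-1}$ Weyl group, and the $Q_n$ are $W_n$-antisymmetrizations, so one can use Hall--Littlewood theory of type $A$. A further simplification: for $\nu$ a weight of $B_n=\prod_{i<j}(1-te^{e_i+e_j})$, every such weight is a $0/1$-vector or has repeated entries, so perhaps only few terms survive. I would try the direct recursive approach first, as it fits the established pattern.

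The main obstacle is the bookkeeping in the recurrence step. Products of many factors $(1-te^{e_i\pm e_n})$ produce last entries ranging over $[-(n-1),n-1]$, and all of these must be reduced. I would handle this by proving an auxiliary statement. It says that modulo $u$, $\delta\star e^{(m)}$ for $|m|\ge2$ can be rewritten in terms of entries $0,1$ with explicit $t$-coefficients. One could also show that most such terms cancel in pairs under the reflection $m\mapsto-m-2$. If the recurrence proves intractable, a fallback is to verify the closed form by matching generating functions in $n$ for fixed $k$.
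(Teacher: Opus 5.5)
There is a genuine gap. The recurrence $v(k,n)=v(k,n-1)-t^{n-k}v(k-1,n-2)$ is read off from the target via $q$-Pascal, and nothing in your proposal derives it from the straightening relations. That derivation is the entire content of the lemma, and the mechanism you sketch cannot produce it.

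\emph{Peeling the last coordinate is incompatible with straightening.} The long simple root $2e_{n-1}$ of $\Lambda_{n-1}$ is not simple in $\Lambda_n$. So $\mathrm{str}(f\star e^{(0)})\neq\mathrm{str}(f)\star e^{(0)}$ once $f$ has negative entries, as weights of $S_{n-1}$ do. For example, modulo $u$ you have $e^{(a,-2)}\equiv-e^{(a,0)}$ in $\Lambda_2$. In $\Lambda_3$, however, the $e_2-e_3$ relation together with $e^{(a,-1,-1)}\equiv0$ gives $e^{(a,-2,0)}\equiv te^{(a,0,-2)}\equiv-te^{(a,0,0)}$. The paper's recursion for $\delta_{a,n}$ avoids this only because those weights are nonnegative and need only type-$A$ relations.

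\emph{The terms produced are not of the form you can handle.} The factor $\prod_{i<n}(1-te^{e_i-e_n})(1-te^{e_i+e_n})$ touches every other coordinate, so you get terms $(S_{n-1}e^{\gamma})\star e^{(p-m)}$ for many $\gamma$. \Cref{strsmall} only treats a trailing $1$ or $2$ behind a run of zeros, not such terms. (Also, the long-root relation changes $\lvert\mu\rvert$; only its parity is preserved.)

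\emph{Peeling the first coordinate does not rescue the recurrence either.} There $\prod_{j\ge2}(1-te^{e_1-e_j})(1-te^{e_1+e_j})$ is invariant under signed permutations of $e_2,\dots,e_n$, so you may legitimately replace $S_{n-1}$ by its straightening. But you would then need all of $\mathrm{str}(S_{n-1})$, which contains non-fundamental weights: already $\mathrm{str}(S_2)\equiv1-te^{\omega_2}+t^2e^{(2,0)}$. So the numbers $v(\cdot,n-1)$ are not a sufficient induction hypothesis, and nothing produces a $v(k-1,n-2)$ term.

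The missing idea is the transposition in \Cref{newstr}. Modulo $u$, $Q_n(S_n)$ equals the $W_n$-invariant $\prod_{\alpha\in R_n^s}(1-te^\alpha)$. Pairing with $P_n(\omega_a,t,0)$ gives $[e^{\omega_a}]\mathrm{str}(S_n)\equiv p_n(e^{\omega_a}\prod_iT_i)/([a]_t![n-a]_t!)$. This moves all $t$-dependence onto the fixed target weight and replaces the $t$-dependent straightening by the $t$-free dot-action relation $\equiv_p$.

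Two facts then make the induction close on fundamental weights:
\begin{itemize}
\item $T_1$ is invariant under the Weyl group of coordinates $2,\dots,n$.
\item By \Cref{pstrcomp} and \Cref{Scomp}, $T_1e^{\omega_k}$ is $\equiv_p$ a combination of the $e^{\omega_{k-2i}}$.
\end{itemize}
Together these let the paper prove the stronger statement $e^{\omega_a}\prod_iT_i\equiv_p[a]_t![n-a]_t!\sum_i(-1)^it^{\binom{i+1}{2}}\qbinom{n-a+i}{i}{t}e^{\omega_{a-2i}}$ for all $a$, which reduces to a $q$-series identity. Without some such device, your plan has no concrete route to the recurrence, and the generating-function fallback presupposes values you have no way to compute.
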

The proof of this lemma is substantially more involved, and will take the rest of this subsection. We advise the reader to skip this proof in a first reading, and instead head to \Cref{KFthmSec} for the proof of \Cref{KFthm}.

\begin{definition}
    We consider the function $p_n\colon\Z[t][\Lambda_n]\to\Z[t]$ given by
    \begin{equation}
        p_n(e^\mu)\defeq[e^{\rho_n}]\left(\sum_{w\in W_n}(-1)^we^{w(\mu+\rho_n)}\right)
    \end{equation}
    and extended linearly.
\end{definition}
\begin{remark}\label{pnRemark}
    Naturally, for $\mu\in\Lambda^+_n$ we have
    \begin{equation}
        p_n(e^{\mu})=\begin{cases}
            1&\text{if }\mu=0,\\
            0&\text{otherwise}.
        \end{cases}
    \end{equation}
\end{remark}
\begin{definition}
    For $1\le m\le n,$ we consider the following elements of $\Z[t][\Lambda_n].$
    \begin{equation}
        T_{m}^-\defeq\prod_{j=m+1}^n(1-te^{-e_m+e_j}),\quad T_m^+\defeq\prod_{j=m+1}^n(1-te^{-e_m-e_j})
    \end{equation}
    and $T_m\defeq T_m^+T_m^-.$
\end{definition}

\begin{proposition}\label{newstr}
    We have
    \begin{equation}
        [e^{\omega_a}](\mathrm{str}(S_n))\equiv\frac{1}{[a]_t![n-a]_t!}\cdot p_n\left(e^{\omega_{a}}\prod_{i=1}^{n}T_{i}\right)\mod u\Z[t,u].
    \end{equation}
\end{proposition}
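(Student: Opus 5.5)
The plan is to compute the left-hand side directly from the definition of $\mathrm{str}$, after reducing modulo $u$ at every step. Write $S_n=\sum_\mu c_\mu e^\mu$. By definition,
\[
[e^{\omega_a}]\mathrm{str}(S_n)=\Big\langle P_n(\omega_a,t,u),\ \sum_\mu c_\mu Q_n(\mu,t,u)\Big\rangle .
\]
Reduction modulo $u$ is a ring homomorphism on $\Q\llbracket t,u\rrbracket$. The only denominator that appears is $W_{\omega_a}(t,u)$, and I will check below that its value at $u=0$ is a nonzero polynomial in $t$. So we may set $u=0$ everywhere, and I will argue with $u=0$ from now on.

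\textbf{Step 1: the second argument.} Exactly as in the proof of \Cref{KFrec},
\[
\sum_\mu c_\mu Q_n(\mu,t,u)=\frac{J_n\left(e^{\rho_n}S_n\prod_{\alpha\in R_n^+}(1-t_\alpha e^{-\alpha})\right)}{J_n(e^{\rho_n})}.
\]
The product $S_n\prod_{\alpha\in R_n^{+,s}}(1-te^{-\alpha})=\prod_{\alpha\in R_n^s}(1-te^{\alpha})$ is $W_n$-invariant. At $u=0$ the long factors are $1$. Hence the second argument equals $F\defeq\prod_{\alpha\in R^s_n}(1-te^\alpha)$.

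\textbf{Step 2: the inner product.} In $\langle P_n(\omega_a,t,0),F\rangle$, the factor $\bar F=F$ cancels the short-root denominators $\prod_{\alpha\in R^s_n}(1-te^\alpha)$. At $u=0$ the long-root denominators are $1$. What remains is
\[
\frac{1}{\#W_n}\mathrm{CT}\Big(P_n(\omega_a,t,0)\prod_{\alpha\in R_n}(1-e^\alpha)\Big).
\]
Next, write $\prod_{\alpha\in R_n}(1-e^{\alpha})=J_n(e^{\rho_n})\overline{J_n(e^{\rho_n})}$, using the Weyl denominator formula with the appropriate sign. This cancels the denominator $J_n(e^{\rho_n})$ of $P_n$. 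Since $\overline{J_n(e^{\rho_n})}=\sum_w(-1)^we^{-w\rho_n}$ and the numerator $J_n(\cdots)$ is $W_n$-anti-invariant, the constant term of each of the $\#W_n$ summands is the same. This removes the $1/\#W_n$ and leaves
\[
\frac{1}{W_{\omega_a}(t,0)}[e^{\rho_n}]\,J_n\Big(e^{\omega_a+\rho_n}\prod_{\alpha\in R^{+,s}_n}(1-te^{-\alpha})\Big),
\]
with the long factors again equal to $1$ at $u=0$. Finally, $\prod_{m=1}^nT_m=\prod_{\alpha\in R^{+,s}_n}(1-te^{-\alpha})$, because the exponents $-e_m\pm e_j$ with $j>m$ run over exactly the negatives of the short positive roots. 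By linearity of $p_n$, the bracketed quantity is therefore $p_n\big(e^{\omega_a}\prod_iT_i\big)$.

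\textbf{Step 3: the normalizing factor.} The stabilizer of $\omega_a$ is $S_a\times W(C_{n-a})$. In $W_{\omega_a}(t,0)$, any $w$ having a long root $2e_i$ among its inversions contributes $0$. An element of the hyperoctahedral group has no long inversions exactly when it involves no sign changes, i.e. when it is a permutation. So only $S_a\times S_{n-a}$ survives, and each such element contributes $t^{\ell(w)}$. This gives $W_{\omega_a}(t,0)=[a]_t![n-a]_t!$, which is nonzero as promised, and the stated formula follows.

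I expect the main obstacle to be bookkeeping rather than anything conceptual. The two points needing care are the sign and conjugation conventions when $\prod_{\alpha\in R_n}(1-e^\alpha)$ is split as $J_n(e^{\rho_n})\overline{J_n(e^{\rho_n})}$, and the verification that the constant terms of all $\#W_n$ summands agree. Both should be handled exactly as in the proofs of \Cref{PQ=1} and \Cref{LusztigKatoCor}.
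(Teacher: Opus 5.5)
Your proposal is correct and is essentially the paper's proof. The paper likewise sets $u=0$ and writes the pairing as $\frac{1}{\#W_n\,W_{\omega_a}(t,0)}\mathrm{CT}\bigl(J_n(e^{\omega_a+\rho_n}\prod_{\alpha\in R^{+,s}_n}(1-te^{-\alpha}))\,J_n(e^{-\rho_n})\bigr)$, then extracts the coefficient of $e^{\rho_n}$ to get $p_n$, and finally identifies $\prod_iT_i$ and $W_{\omega_a}(t,0)=[a]_t![n-a]_t!$. You additionally spell out two steps the paper leaves tacit: that $Q_n(S_n,t,0)=\prod_{\alpha\in R^s_n}(1-te^{\alpha})$ is $W_n$-invariant and cancels the short-root denominators, and that only sign-free permutations contribute to $W_{\omega_a}(t,0)$.
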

\begin{proof}
By definition, we have
\begin{equation}
    [e^{\mu}](\mathrm{str}(S_n))\equiv\langle P_n(\mu,t,u),Q_n(S_n,t,u)\rangle\mod u\Z[t,u]
\end{equation}
where we understand $Q_n(f,t,u)$ to be the linear extension of the $Q_n(\mu,t,u).$ Now
\begin{equation}
\begin{split}
    \langle P_n(\mu,t,0),Q_n(S_n,t,0)\rangle&=\frac{1}{\#W_n\cdot W_{\mu}(t,0)}\cdot\mathrm{CT}\left(J_n\left(e^{\mu+\rho_n}\prod_{\alpha\in R^{s,+}_n}(1-te^{-\alpha})\right)J_n(e^{-\rho_n})\right)\\
    &=\frac{1}{W_{\mu}(t,0)}\cdot [e^{\rho_n}]\left(\sum_{w\in W_n}(-1)^ww\left(e^{\mu+\rho_n}\prod_{\alpha\in R^{s,+}_n}(1-te^{-\alpha})\right)\right)\\
    &=\frac{1}{W_{\mu}(t,0)}\cdot p_n\left(e^{\mu}\prod_{\alpha\in R^{s,+}_n}(1-te^{-\alpha})\right).
\end{split}
\end{equation}
And the claim follows from the observation that
\begin{equation}
    \prod_{\alpha\in R^{s,+}_n}(1-te^{-\alpha})=\prod_{i=1}^{n}T_{i}
\end{equation}
and that $W_{\omega_a}(t,0)=[a]_t![n-a]_t!.$
\end{proof}

Rather than computing $p_n\left(e^{\omega_{a}}\prod_{i=1}^{n}T_{i}\right)$ directly, we will instead compute the image of $e^{\omega_{a}}\prod_{i=1}^{n}T_{i}$ under the following equivalence relation. This will ultimately allow us to perform an induction.
\begin{definition}
    We consider the equivalence relation $\equiv_p$ on $\Z[t][\Lambda_n]$ generated by the relations
    \begin{equation}
        e^\mu+e^{s_\beta(\mu+\beta)}\equiv_p0
    \end{equation}
    for all $\beta\in\Delta^+_n.$ Note that this is nothing more than saying that $e^{w(\mu+\rho_n)-\rho_n}\equiv_p (-1)^we^\mu$ for $w\in W_n.$
\end{definition}
\begin{proposition}\label{equivp}
    If $\beta\in\Delta^+_n,$ then
    \begin{equation}
        p_n(e^\mu)=-p_n(e^{s_\beta(\mu+\beta)}).
    \end{equation}
    In particular, if $f\equiv_p g$ then $p_n(f)=p_n(g).$
\end{proposition}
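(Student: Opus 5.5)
The plan is to reduce everything to one identity of Weyl group orbits, namely
\[
s_\beta\bigl(s_\beta(\mu+\beta)+\rho_n\bigr)=\mu+\rho_n .
\]
Once this holds, the sign relation $p_n(e^\mu)=-p_n(e^{s_\beta(\mu+\beta)})$ follows from the sign character alone.

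\textbf{Step 1: the orbit identity.} For a simple root $\beta\in\Delta^+_n$ we have $\langle\rho_n,\beta^\vee\rangle=1$, so $s_\beta(\rho_n)=\rho_n-\beta$. Since $s_\beta$ is linear and $s_\beta(\beta)=-\beta$,
\[
s_\beta(\mu+\beta)+\rho_n=s_\beta(\mu)-\beta+\rho_n=s_\beta(\mu)+s_\beta(\rho_n)=s_\beta(\mu+\rho_n).
\]
This is exactly the identity $s_\beta(\mu+\rho_n)=(s_\beta(\mu)-\beta)+\rho_n$ already used in the proof of \Cref{strlaw}. Note that $\rho_n=(n,n-1,\ldots,1)$, so the pairing $\langle\rho_n,\beta^\vee\rangle=1$ holds for both $e_i-e_{i+1}$ and $2e_n$. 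Applying $s_\beta$ to both sides gives the identity displayed above.

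\textbf{Step 2: reindexing the sum.} By definition,
\[
p_n\bigl(e^{s_\beta(\mu+\beta)}\bigr)=[e^{\rho_n}]\sum_{w\in W_n}(-1)^w e^{w s_\beta(\mu+\rho_n)}.
\]
Substitute $w'=ws_\beta$. Then $(-1)^{w}=-(-1)^{w'}$, and $w'$ runs over all of $W_n$. The sum therefore equals
\[
-[e^{\rho_n}]\sum_{w'\in W_n}(-1)^{w'}e^{w'(\mu+\rho_n)}=-p_n(e^\mu).
\]

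\textbf{Step 3: the equivalence relation.} By linearity, $p_n$ vanishes on every generator $e^\mu+e^{s_\beta(\mu+\beta)}$ of the relation $\equiv_p$. Hence $p_n$ vanishes on the $\Z[t]$-span of these generators. The relation $f\equiv_p g$ means precisely that $f-g$ lies in this span, so $p_n(f)=p_n(g)$.

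I do not expect a real obstacle. The only point needing care is verifying $\langle\rho_n,\beta^\vee\rangle=1$ for the long simple root $2e_n$ with the paper's $\rho_n$. This holds because $\rho_n$ has last coordinate $1$ and $(2e_n)^\vee=e_n$.
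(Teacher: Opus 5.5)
Your proposal is correct and follows the paper's proof: you reindex the alternating sum by $w\mapsto ws_\beta$ and use $s_\beta(\mu+\rho_n)=(s_\beta(\mu)-\beta)+\rho_n$. You then conclude by $\Z[t]$-linearity of $p_n$. Your explicit check that $\langle\rho_n,\beta^\vee\rangle=1$ for the long simple root $2e_n$ is a detail the paper leaves implicit.
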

\begin{proof}
    This follows at once from observing that
    \begin{equation}
        \sum_{w\in W_n}(-1)^we^{w(\mu+\rho_n)}=\sum_{w\in W_n}(-1)^{ws_\beta}e^{ws_\beta(\mu+\rho_n)}=-\sum_{w\in W_n}(-1)^we^{w(s_\beta(\mu)-\beta+\rho_n)}.\qedhere
    \end{equation}
\end{proof}

\begin{lemma}\label{pstrcomp}
    Consider $a,b,c,d\ge0.$ Then we have
    \begin{equation}
        e^{(-a,2^b,1^c,0^d)}\equiv_p\begin{cases}
            (-1)^{a+1}e^{(1^{b+c+1},0^d)}&\text{if }b=a+1,\\
            (-1)^ae^{(1^b,0^{c+d+1})}&\text{if }a=b+c,\\
            (-1)^{a+1}e^{(1^b,0^{c+d+1})}&\text{if }a=b+c+2(d+1),\\
            (-1)^{a}e^{(1^{b+c+1},0^{d})}&\text{if }a=b+2c+2d+3\\
            -e^{(a-2(b+c+d+1),2^b,1^c,0^d)}&\text{if }a\ge 2(b+c+d+2),\\
            0&\text{otherwise.}
        \end{cases}
    \end{equation}
\end{lemma}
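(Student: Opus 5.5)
The plan is to work directly with the description of $\equiv_p$ recorded after its definition: $e^{\mu}\equiv_p(-1)^we^{w(\mu+\rho_n)-\rho_n}$ for every $w\in W_n$. In type $C_n$ we have $\rho_n=(n,n-1,\ldots,1)$, and $W_n$ acts by signed permutations. The sign $(-1)^w$ is the sign of the underlying permutation times $(-1)$ for each coordinate whose sign is flipped, since $s_{2e_i}$ is a reflection. If $\mu+\rho_n$ has a coordinate equal to $0$, or two coordinates of equal absolute value, then a reflection fixes it and $e^\mu\equiv_p-e^\mu$, hence $e^\mu\equiv_p0$. Otherwise there is a unique $w$ with $w(\mu+\rho_n)$ strictly decreasing and positive, and then $e^\mu\equiv_p(-1)^we^{\nu}$ with $\nu=w(\mu+\rho_n)-\rho_n$ dominant.

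Take $\mu=(-a,2^b,1^c,0^d)$, so $n=1+b+c+d$, and compute $\mu+\rho_n$ block by block.
\begin{itemize}
\item The first entry is $n-a$.
\item The $2$-block gives $n+1,n,\ldots,n-b+2$.
\item The $1$-block gives $n-b,\ldots,n-b-c+1=d+2$.
\item The $0$-block gives $d,\ldots,1$.
\end{itemize}
Thus the last $n-1$ entries are distinct, positive and decreasing, and they fill $\{1,\ldots,n+1\}\setminus\{n-b+1,\,d+1\}$. Consequently $e^\mu\equiv_p0$ unless $|n-a|\in\{n-b+1,d+1\}$ or $|n-a|>n+1$. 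Since $a\ge0$, the value $n-a$ is at most $n$, so the last option forces $n-a<-(n+1)$.

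I then go through the five surviving possibilities, in each case sorting and reading off $\nu$ and the sign.
\begin{itemize}
\item $n-a=n-b+1$, i.e.\ $b=a+1$. The entry moves past the $b$ entries of the $2$-block, giving sign $(-1)^b=(-1)^{a+1}$. The sorted set is $\{1,\ldots,n+1\}\setminus\{d+1\}$, so $\nu=(1^{b+c+1},0^d)$.
\item $n-a=d+1$, i.e.\ $a=b+c$. The entry moves past $b+c$ entries, giving sign $(-1)^a$, and $\nu=(1^b,0^{c+d+1})$.
\item $n-a=-(d+1)$, i.e.\ $a=n+d+1=b+c+2(d+1)$. There is one extra sign flip compared with the previous case, giving $(-1)^{a+1}$.
\item $n-a=-(n-b+1)$, i.e.\ $a=2n-b+1=b+2c+2d+3$. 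The sign is $-(-1)^b=(-1)^a$ because $a\equiv b+1\bmod 2$.
\item $a-n>n+1$, i.e.\ $a\ge2n+2=2(b+c+d+2)$. One sign flip keeps the entry in first position and gives sign $-1$. The result is $\nu=(a-2n,2^b,1^c,0^d)$ with $a-2n=a-2(b+c+d+1)$.
\end{itemize}

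Nothing here is deep. The main risk is bookkeeping: getting the two gap values $n-b+1$ and $d+1$ exactly right, using $n-b-c=d+1$, and tracking the parity of each sign. I would double-check each case on a small example, such as $n=2$.
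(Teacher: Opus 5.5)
Your proof is correct. The block computation of $\mu+\rho_n$, the two gap values $n-b+1$ and $d+1$, and all five signs and resulting weights check out, for instance on small cases with $n=2$. You organize the argument differently from the paper, though the underlying mechanism, the $\rho_n$-shifted action of $W_n$, is the same.

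The paper stays with the generating relations. It uses only the local moves $(\mu_i,\mu_{i+1})\mapsto(\mu_{i+1}-1,\mu_i+1)$ and the reflection of the last coordinate. It pushes the entry $-a$ rightwards through the $2$-, $1$- and $0$-blocks, reflects it at position $n$, and pushes it back leftwards. Along the way it records a zero whenever an adjacent pair has $\mu_{i+1}=\mu_i+1$ and accumulates the sign one step at a time. So the five surviving cases appear only as the possible stopping points of this walk.

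You instead pass to $\mu+\rho_n$ and use the full $W_n$-invariance of $\equiv_p$ in one stroke. Everything then reduces to the observation that the last $n-1$ coordinates fill $\{1,\dots,n+1\}\setminus\{n-b+1,d+1\}$. This shows at a glance why the nonzero cases are exactly $n-a=\pm(n-b+1)$, $n-a=\pm(d+1)$ and $n-a<-(n+1)$, and each sign comes from a single signed permutation.

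The paper's route is closer to the definition but heavier in bookkeeping. Some of its intermediate vanishing thresholds are not sharp, and the last-coordinate move is misprinted as $2-\mu_n$ instead of the $-2-\mu_n$ actually used. Your gap analysis avoids both issues.

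One caveat applies to both arguments: from $e^\mu\equiv_p-e^\mu$ one only gets $2e^\mu\equiv_p0$ over $\Z[t]$. This is harmless, because $\equiv_p$ is only used through $p_n$, which takes values in the torsion-free ring $\Z[t]$.
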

\begin{proof}
    We will repeatedly use that $e^{\mu}\equiv_p -e^{\mu'}$ if
    \begin{equation}
        \mu'_j=\begin{cases}
        \mu_j&\text{if }j\not\in\{i,i+1\}\\
        \mu_{i+1}-1&\text{if }j=i\\
        \mu_i+1&\text{if }j=i+1.\end{cases}
    \end{equation}
    for some $1\le i<n.$ In particular $e^\mu\equiv_p 0$ if $\mu_{i+1}=\mu_i+1$ for some $1\le i<n.$ Furthermore, note that $e^{\mu}\equiv_p -e^{\mu'}$ if
    \begin{equation}
        \mu'_j=\begin{cases}
        \mu_j&\text{if }j\neq n\\
        2-\mu_j&\text{if }j=n.\end{cases}
    \end{equation}

    Now note that $e^{(-a,2^b,1^c,0^d)}$ is equivalent to $0$ unless $a\ge b-1,$ in which case
    \begin{equation}
        e^{(-a,2^b,1^c,0^d)}\equiv_p(-1)^be^{(1^b,-(a-b),1^c,0^d)}.
    \end{equation}
    So if $a=b-1$ we get $e^{(-a,2^b,1^c,0^d)}\equiv_p(-1)^{a-1}e^{(1^{b+c+1},0^d)}.$ If $a=b,$ then we have
    \begin{equation}
        e^{(-a,2^b,1^c,0^d)}\equiv_p\begin{cases}
            (-1)^ae^{(1^b,0^{d+1})}&\text{if }c=0,\\
            0&\text{if }c>0.
        \end{cases}
    \end{equation}
    If $a>b,$ then $e^{(1^b,-(a-b),1^c,0^d)}$ is equivalent to zero unless $a-b\ge c,$ in which case we have
    \begin{equation}
        (-1)^be^{(1^b,-(a-b),1^c,0^d)}\equiv_p (-1)^{b+c}e^{(1^b,0^c,-(a-b-c),0^d)}.
    \end{equation}
    So if $a=b+c$ we get $e^{(-a,2^b,1^c,0^d)}\equiv_p(-1)^ae^{(1^b,0^{c+d+1})}.$ If $a>b+c,$ then $e^{(1^b,0^c,-(a-b-c),0^d)}$ is equivalent to zero unless $a-b-c\ge d,$ in which case
    \begin{equation}
         (-1)^{b+c}e^{(1^b,0^c,-(a-b-c),0^d)}\equiv_p(-1)^{b+c+d}e^{(1^b,0^c,(-1)^d,-(a-b-c-d))}.
    \end{equation}
    So in fact this is equivalent to zero unless $a-b-c-d\ge2,$ in which case
    \begin{equation}
        (-1)^{b+c+d}e^{(1^b,0^c,(-1)^d,-(a-b-c-d))}\equiv_p(-1)^{b+c+d+1}e^{(1^b,0^c,(-1)^d,(a-b-c-d-2))}.
    \end{equation}
    Continuing similarly, this is equivalent to zero unless $a-b-c-d-2\ge d,$ in which case
    \begin{equation}
        (-1)^{b+c+d+1}e^{(1^b,0^c,(-1)^d,(a-b-c-d-2))}\equiv_p(-1)^{b+c+1}e^{(1^b,0^c,(a-b-c-2d-2),0^d)}.
    \end{equation}
    If $a=b+c+2(d+1),$ this is $(-1)^{a+1}e^{(1^b,0^{c+d+1})}.$ If $a>b+c+2(d+1),$ then this is equivalent to zero unless $a-b-c-2d-2\ge c+1,$ in which case
    \begin{equation}
        (-1)^{b+c+1}e^{(1^b,0^c,(a-b-c-2d-2),0^d)}\equiv_p(-1)^{b+1}e^{(1^b,(a-b-2c-2d-2),1^c,0^d)}.
    \end{equation}
    This is equivalent to zero unless $a-b-2c-2d-2$ is $1$ or $\ge b.$ In the latter case, this is
    \begin{equation}
        (-1)^{b+1}e^{(1^b,(a-b-2c-2d-2),1^c,0^d)}\equiv_p-e^{((a-2b-2c-2d-2),2^b,1^c,0^d)}.
    \end{equation}
    This concludes the proof of the claim.
\end{proof}

\begin{lemma}\label{Scomp}
    Let $0\le k\le n.$ Then
    \begin{equation}
        T_1e^{\omega_{k}}\equiv_p\begin{cases}
            [k]_te^{\omega_{k}}+[n+1-k]_t\sum_{1\le i\le k/2}t^{k-i}([i-1]_tt^{n+i-k+2}-[i+1]_t)e^{\omega_{k-2i}}&\text{if }k>0,\\
            [n]_te^{\omega_0}&\text{if }k=0.
        \end{cases}
    \end{equation}
\end{lemma}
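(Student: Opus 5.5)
The plan is to expand $T_1$ completely, reduce each resulting monomial modulo $\equiv_p$ to a sign times some $e^{\omega_{k-2i}}$ (or to $0$), and then collect coefficients. Write
\begin{equation}
T_1e^{\omega_k}=\prod_{j=2}^n\bigl(1-te^{-e_1+e_j}\bigr)\bigl(1-te^{-e_1-e_j}\bigr)\,e^{\omega_k}.
\end{equation}
For each $j\ge2$ we choose one of four options: nothing, $e^{-e_1+e_j}$, $e^{-e_1-e_j}$, or both. These shift coordinate $j$ by $0,+1,-1,0$ respectively, with weights $1,-t,-t,t^2$.

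\textbf{Step 1: bookkeeping of the expansion.} Every monomial has the shape $e^{(\epsilon-m,\nu_2,\ldots,\nu_n)}$, where $\epsilon=1$ if $k>0$ (and $0$ otherwise) and $m$ is the total number of factors chosen. The entries satisfy $\nu_j\in\{0,1,2\}$ for $2\le j\le k$ and $\nu_j\in\{-1,0,1\}$ for $j>k$.

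\textbf{Step 2: normalise the tail.} We bring the tail $(\nu_2,\ldots,\nu_n)$ into the shape $(2^b,1^c,0^d)$ required by \Cref{pstrcomp}, using only the relations for the simple roots $e_i-e_{i+1}$ with $i\ge2$ and for $2e_n$. These relations act only on coordinates $2,\ldots,n$ and leave the first coordinate untouched. Concretely, the relation $e^\mu\equiv_p-e^{\mu'}$ with $(\mu'_i,\mu'_{i+1})=(\mu_{i+1}-1,\mu_i+1)$ has the following effects:
\begin{itemize}[label={}]
\item $\bullet$ any adjacent increase by exactly $1$, such as $(0,1)$, $(1,2)$ or $(-1,0)$, kills the monomial;
\item $\bullet$ a pattern $(0,2)$ becomes $-(1,1)$;
\item $\bullet$ a $-1$ followed by a $1$ becomes $-(0,0)$;
\item $\bullet$ entries $-1$ can be pushed to the end and then removed with the $2e_n$ reflection.
\end{itemize}
After this, each surviving monomial is $\pm t^{(\cdot)}e^{(-a,2^b,1^c,0^d)}$ with $b+c+d=n-1$. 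Here it is cleanest to sum first over the choices in the block $j\le k$ and in the block $j>k$ separately. In each block the surviving configurations should be few: essentially a run of $2$'s, then $1$'s, then $0$'s, with at most one $-1$/$1$ cancellation pair. The sums of $\pm t^{\ell}$ over the positions of the chosen factors then collapse to $t$-integers by the usual inversion-counting.

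\textbf{Step 3: apply \Cref{pstrcomp}.} The first coordinate is $-a=\epsilon-m$, and \Cref{pstrcomp} either kills the monomial or converts it to $\pm e^{(1^{b'},0^{n-b'})}$. Since $m$ is bounded by $2(n-1)$, its fifth case, which would leave a non-fundamental weight, should only occur in ranges that cancel or are impossible. This case check needs to be done. The resulting targets are $e^{\omega_k}$ itself, which comes from configurations changing nothing net, and $e^{\omega_{k-2i}}$, which comes from $i$ pairs converting $1$'s to $0$'s.

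\textbf{Step 4: collect coefficients.} For $e^{\omega_k}$ I expect the contributions to sum to $[k]_t$. For $k=0$ there is no leading $1$ in the first coordinate, and the contributions should give $[n]_t$ instead. For $e^{\omega_{k-2i}}$ the target is
\begin{equation}
[n+1-k]_t\,t^{k-i}\bigl([i-1]_tt^{n+i-k+2}-[i+1]_t\bigr).
\end{equation}
The factor $[n+1-k]_t$ should arise from summing over the position in the zero block $j>k$ where the case ``$a=b+c+2(d+1)$'' versus ``$a=b+c$'' of \Cref{pstrcomp} is triggered. The two terms $[i-1]_t t^{n+i-k+2}$ and $-[i+1]_t$ should come from the two different cases of \Cref{pstrcomp}, with their opposite signs.

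\textbf{Main obstacle.} The hard part is Steps 2–4 together: organising the signed, $t$-weighted configurations so that the cancellations are visible and the geometric sums are recognisable. I would first verify the claimed formula for $n=2,3$ by hand to pin down which configurations survive. I would then prove the general case by induction on $n$, peeling off the last coordinate $j=n$ in the spirit of the recurrence in \Cref{strComputationL}, rather than by a closed-form enumeration.
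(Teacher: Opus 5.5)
Your overall frame is the right one: expand $T_1$, bring the tail to the shape $(2^b,1^c,0^d)$ using only the relations for $e_i-e_{i+1}$ ($i\ge2$) and $2e_n$, then apply \Cref{pstrcomp}. It is also true that every surviving tail normalizes to that shape. But the proposal stops exactly where the proof has to happen. Steps 2--4 are stated as expectations, and the guesses behind them are wrong.

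\emph{One-shot expansion does not give few terms.} If you expand $T_1=T_1^+T_1^-$ in one go, an unchanged tail entry carries weight $1+t^2z^2$. Here $z$ marks $e^{-e_1}$, and the two options are choosing neither factor or both. So the first coordinate is not determined by the normalized tail. There is also no bound on the number of $(-1,1)$ swaps: the zero-block pattern $(-1,1,-1,1)$ survives with sign $+$ and has the same normalized tail as $(0,0,0,0)$. The coefficient of a normalized monomial is really a signed minor of a banded matrix. For $k=1$, the configurations whose tail normalizes to $(0^{n-1})$ already sum to $\sum_{i=0}^{n-1}(tz)^{2i}$. This is not a few terms collapsing to $t$-integers by inversion counting.

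\emph{The attribution of $[k]_t$ is wrong.} It does not come from configurations ``changing nothing net''; those give $e^{\omega_k}$ only through the trivial configuration. The terms $t^a$, $1\le a\le k-1$, come from raising the first $a$ entries of the $1$-block and then using the first case of \Cref{pstrcomp}.

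\emph{The fallback induction fails.} Peeling off the last coordinate is incompatible with $\equiv_p$, because the long-root relation of $\Lambda_{n-1}$ is not a relation in $\Lambda_n$. For example, in $\Lambda_1$ one has $e^{(0)}\equiv_p-e^{(-2)}$, yet $p_2(e^{(0,0)})=1$ and $p_2(e^{(-2,0)})=0$. Moreover, \Cref{pstrcomp} carries the first entry all the way to position $n$ and uses the $2e_n$ relation there.

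\emph{The missing idea: reduce in two stages, so that no cancellations arise.}
Let $\equiv_p'$ be generated only by the relations for $e_i-e_{i+1}$, $i\ge2$. Since $T_1^+$ is symmetric in $e_2,\dots,e_n$, multiplication by it commutes with the dot action of these reflections. Hence $f\equiv_p'g$ implies $T_1^+f\equiv_pT_1^+g$.

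First reduce $T_1^-e^{\omega_k}$. The killing patterns $(1,2)$ and $(0,1)$ force the raised entries to be prefixes of their blocks, so
$T_1^-e^{\omega_k}\equiv_p'\sum_{a,b}(-t)^{a+b}e^{(1-a-b,2^a,1^{k-1-a+b},0^{n-k-b})}$.

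Then apply $T_1^+$. Lowered $2$'s and $1$'s must be suffixes of their blocks. Any $-1$ in the zero block kills the monomial, since the rightmost one is either followed by a $0$ or sits in position $n$, where the $2e_n$ relation fixes it. Thus each tuple $(a,b,c,d)$ contributes the single term $(-t)^{a+b+c+d}e^{(1-a-b-c-d,2^{a-c},1^{c+k-1-a+b-d},0^{n-k-b+d})}$.

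Now \Cref{pstrcomp} applies term by term:
\begin{itemize}
\item Its first case occurs iff $b=c=d=0$, giving $[k]_te^{\omega_k}$.
\item Its second case occurs iff $a+c+2d=k$. The free sum over $0\le b\le n-k$ yields $[n+1-k]_t$, and the sum over $d$ with $i=c+d$ fixed yields $-t^{k-i}[i+1]_t$.
\item Its third case occurs iff $a+2b+c=2n-k+2$. Geometric sums over $d$ and then $a$ yield $[n+1-k]_t\,t^{n+2}[i-1]_t$.
\item The last two cases are excluded by the size bounds on $a,b,c,d$.
\end{itemize}
The case $k=0$ is the same, starting from $T_1^-e^{\omega_0}\equiv_p'\sum_i(-t)^ie^{(-i,1^i,0^{n-1-i})}$.
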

\begin{proof}
    We consider the following auxiliary equivalence relation: $\equiv_p'$ to be generated by the relations
    \begin{equation}
        e^\mu+e^{s_\beta(\mu+\beta)}\equiv_p'0
    \end{equation}
    for all $\beta\in\Delta^{s,+}_n$ with $\beta_1=0.$ We note that if $f\equiv_p' g,$ then $T_1^+f\equiv_p T_1^+g.$

    Thus we start by computing $T_{1}^-e^{\omega_k}\mod\equiv_p'.$ First note that $e^\mu\equiv_p'0$ if $(\mu_i,\mu_{i+1})=(b,b+1)$ for some $2\le i<n$ and $b\in\Z.$ With just this fact, we can see that
    \begin{equation}
        T_1^-e^{\omega_{k}}\equiv_p'\begin{cases}
            \sum_{a=0}^{k-1}\sum_{b=0}^{n-k}(-t)^{a+b}e^{(1-a-b,2^a,1^{k-1-a+b},0^{n-k-b})}&\text{if }k\ge1,\\
            \sum_{i=0}^{n-1}(-t)^ie^{(-i,1^i,0^{n-1-i})}&\text{if }k=0.
        \end{cases}
    \end{equation}
    Similarly, applying $T_1^+$ to both sides give us
    \begin{equation}
        T_1e^{\omega_{k}}\equiv_p\begin{cases}
            \sum_{a=0}^{k-1}\sum_{b=0}^{n-k}\sum_{c=0}^a\sum_{d=0}^{k-1-a+b}(-t)^{a+b+c+d}e^{(1-a-b-c-d,2^{a-c},1^{c+k-1-a+b-d},0^{n-k-b+d})}&\text{if }k\ge1,\\
            \sum_{i=0}^{n-1}\sum_{j=0}^i(-t)^{i+j}e^{(-i-j,1^{i-j},0^{n-1-i+j})}&\text{if }k=0.
        \end{cases}
    \end{equation}
    Hence we can use \Cref{pstrcomp} to compute these.
    
    First note that it immediately follows that
    \begin{equation}
        T_1e^{\omega_0}\equiv_p\sum_{i=0}^{n-1}(-t)^i(-1)^ie^{\omega_0}=[n]_te^{\omega_0}.
    \end{equation}
    Now assume $k\ge1.$ We have the following contributions:
    \begin{itemize}
        \item $(-t)^a(-1)^ae^{(1^{k},0^{n-k})}$ when $b=c=d=0,$ totaling $[k]_te^{\omega_k},$
        \item $-t^{a+b+c+d}e^{(1^{a-c},0^{n-a+c})}$ when $a+c+2d=k,$
        \item $t^{a+b+c+d}e^{(1^{a-c},0^{n-a+c})}$ when $a+2b+c=2n-k+2.$
    \end{itemize}
    For the second point, the appropriate $(a,b,c,d)$ are such that
    \begin{equation}
        a+c+2d=k,\quad 0\le c\le  a<k,\quad 0\le d,\quad 0\le b\le n-k,
    \end{equation}
    hence the contribution is
    \begin{equation}
        -[n+1-k]_t\sum_{a+c+2d=k,\ 0\le c\le a<k,\ 0\le d}t^{k-d}e^{\omega_{k-2c-2d}}.
    \end{equation}
    Writing $i=c+d,$ this is
    \begin{equation}
        -[n+1-k]_t\sum_{i=1}^{\lfloor k/2\rfloor}\sum_{d=0}^it^{k-d}e^{\omega_{k-2i}}=-[n+1-k]_t\sum_{i=1}^{\lfloor k/2\rfloor}t^{k-i}[i+1]_te^{\omega_{k-2i}}.
    \end{equation}
    For the third point, write $i=(n-k-b)+(k-1-a)+2.$ Then $a+2b+c=2n-k+2$ is equivalent to $c=(a+2i-k),$ and $b=n-a-i+1.$ Thus the appropriate $(a,i,d)$ are such that
    \begin{equation}
        2\le i\le k/2,\quad k+1-i\le a<k,\quad 0\le d<n+k-2a-i+1,
    \end{equation}
    hence the contribution is
    \begin{equation}
        \sum_{i=2}^{\lfloor k/2\rfloor}\sum_{a=k+1-i}^{k-1}[n+k-2a-i+1]_tt^{n+1+a+i-k}e^{\omega_{k-2i}}.
    \end{equation}
    Finally, we compute the inner sum:
    \begin{equation}
        \sum_{a=k+1-i}^{k-1}[n+k-2a-i+1]_tt^{n+1+a+i-k}=\frac{t^{n+2}}{t-1}\sum_{a=k+1-i}^{k-1}(t^{n-a}-t^{a-k+i-1})=\frac{t^{n+2}}{t-1}(t^{n-k+1}[i-1]_t-[i-1]_t).
    \end{equation}
    
    The claim follows by summing the three contributions.
\end{proof}

\begin{proof}[Proof of \Cref{strComputationS}]
    By \Cref{newstr}, it suffices to compute
    \begin{equation}
        p_n\left(e^{\omega_{2k}}\prod_{i=1}^nT_i\right).
    \end{equation}
    In fact, we will compute for $0\le a\le n$ that
    \begin{equation}
        e^{\omega_a}\prod_{i=1}^nT_i\equiv_p[a]_t![n-a]_t!\sum_{0\le i\le a/2}(-1)^it^{\binom{i+1}{2}}\qbinom{n-a+i}{i}{t}e^{\omega_{a-2i}},
    \end{equation}
    which implies the claimed result by \Cref{equivp}.

    We note the following: if $\delta\in\Z[t][\Lambda_m],$ $f,g\in\Z[t][\Lambda_{n-m}],$ and $a\ge m,$ then $f\equiv_p g$ implies that $T_{m}(\delta \star g)\equiv_pT_{m}(\delta \star f).$ Furthermore, if $\delta\in\Z[t][\Lambda_{1}],$ $f\in\Z[t][\Lambda_{n-1}]$ and $m\ge2,$ then by definition we have
    \begin{equation}
        T_{m}(\delta\star f)=\delta\star(T_{m-1}f).
    \end{equation}
    Thus we will compute $e^{\omega_a}\prod_{i=1}^{n}T_{i}\mod\equiv_p$ by induction in $a$ with \Cref{Scomp}. The base case $a=0$ follows easily from \Cref{pnRemark}.

    For the induction hypothesis, take $a>0$ and write
    \begin{equation}
        e^{\omega_a}\prod_{i=1}^nT_i\equiv_p[a-1]_t![n-a]_t!\cdot T_1\left(\sum_{i=0}^{\lfloor (a-1)/2\rfloor}(-1)^it^{\binom{i+1}{2}}\qbinom{n-a+i}{i}{t}e^{\omega_{a-2i}}\right),
    \end{equation}
    by the induction hypothesis for $(a-1,n-1).$ By \Cref{Scomp}, it remains to prove that
    \begin{equation}
    \begin{split}
        &\sum_{i=0}^{\lfloor (a-1)/2\rfloor}(-1)^it^{\binom{i+1}{2}}\qbinom{n-a+i}{i}{t}\cdot\\
        &\qquad\cdot\left([a-2i]_te^{\omega_{a-2i}}+[n+1-a+2i]_t\sum_{j\ge1}t^{a-2i-j}([j-1]_tt^{n+j-a+2i+2}-[j+1]_t)e^{\omega_{a-2i-2j}}\right)
    \end{split}
    \end{equation}
    equals $[a]_t\sum_{i\ge0}(-1)^it^{\binom{i+1}{2}}\qbinom{n-a+i}{i}{t}e^{\omega_{a-2i}}.$ Comparing each term, it remains to see that
    \begin{equation}
        \sum_{j=1}^i(-1)^{i-j}t^{\binom{i-j+1}{2}}\qbinom{n-a+i-j}{i-j}{t}[n+1-a+2i-2j]_tt^{a-2i+j}([j-1]_tt^{n-a+2i+2-j}-[j+1]_t)
    \end{equation}
    equals $([a]_t-[a-2i]_t)(-1)^it^{\binom{i+1}{2}}\qbinom{n-a+i}{i}{t}.$ Changing $n$ to $n+a$ and $j$ to $i-j+1,$ as well as dividing by $t^{a-2i},$ it remains to prove the identity
    \begin{equation}
        (-1)^i[2i]_tt^{\binom{i+1}{2}}\qbinom{n+i}{i}{t}\isequal\sum_{j=1}^i(-1)^jt^{\binom{j}{2}}\qbinom{n+j-1}{j-1}{t}[n-1+2j]_tt^{i-j+1}([i-j+2]_t-[i-j]_tt^{n+i+j+1}).
    \end{equation}
    We will do this in the next lemma.
\end{proof}
\begin{lemma}
For all $i\ge0$ and $n\in\Z,$ we have the identity
    \begin{equation}
        (-1)^i[2i]_tt^{\binom{i+1}{2}}\qbinom{n+i}{i}{t}=\sum_{j=1}^i(-1)^jt^{\binom{j}{2}}\qbinom{n+j-1}{j-1}{t}[n-1+2j]_tt^{i-j+1}([i-j+2]_t-[i-j]_tt^{n+i+j+1}).
    \end{equation}
\end{lemma}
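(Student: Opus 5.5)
Both sides of the identity are polynomials in $x=t^n$ with coefficients in $\Q(t)$. Indeed, $\qbinom{n+j-1}{j-1}{t}$ and $\qbinom{n+i}{i}{t}$ are polynomials in $t^n$ of degree $j-1$ and $i$, $[n-1+2j]_t$ is affine in $t^n$, and the factor $t^{n+i+j+1}$ is linear in $t^n$. It therefore suffices to prove the identity for all integers $n\ge 0$, or even for infinitely many $n$, and I will proceed by induction on $i$ with $n$ free.

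Write $R_i(n)$ for the right-hand side. The natural move is to take the difference $R_{i+1}(n)-R_i(n)$. The summand depends on $i$ only through the factor
\begin{equation}
t^{i-j+1}\bigl([i-j+2]_t-[i-j]_t\,t^{n+i+j+1}\bigr),
\end{equation}
so I will first simplify this $i$-dependence. Using $[m]_t=(1-t^m)/(1-t)$, this factor becomes an explicit combination of the four monomials $t^{i-j+1}$, $t^{2i-2j+3}$, $t^{n+2i+2}$ and $t^{n+3i-j+2}$, each divided by $1-t$.

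This splits $R_i(n)$ into four sums of the form
\begin{equation}
\sum_{j=1}^i(-1)^j t^{\binom{j}{2}}\qbinom{n+j-1}{j-1}{t}[n-1+2j]_t\, t^{\epsilon j},
\end{equation}
with $\epsilon\in\{-1,-2,0\}$ and prefactors depending on $i$ and $n$. Each such sum should telescope. The key claim is that
\begin{equation}
(-1)^j t^{\binom{j}{2}}\qbinom{n+j-1}{j-1}{t}[n-1+2j]_t\,t^{c j}
\end{equation}
is a difference $F(j)-F(j-1)$ with $F(j)=(-1)^j t^{\binom{j+1}{2}+\dots}\qbinom{n+j}{j}{t}$ times a suitable power. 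This parallels the classical identity $\qbinom{n+j}{j}{t}-\qbinom{n+j-1}{j-1}{t}=t^{j}\qbinom{n+j-1}{j}{t}$, together with the factorization
\begin{equation}
[n-1+2j]_t\qbinom{n+j-1}{j-1}{t}=[j]_t\qbinom{n+j-1}{j}{t}\cdot\frac{[n-1+2j]_t}{[n]_t}\cdots .
\end{equation}
Concretely, I would guess closed forms for the partial sums, of the shape $(-1)^i t^{\binom{i+1}{2}}\qbinom{n+i}{i}{t}$ times simple Laurent polynomials in $t^i$ and $t^n$. I would then verify the increment from $i-1$ to $i$ directly: this is a rational-function check after dividing by $\qbinom{n+i-1}{i-1}{t}$, using $\qbinom{n+i}{i}{t}/\qbinom{n+i-1}{i-1}{t}=[n+i]_t/[i]_t$.

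The base case $i=0$ is trivial, since both sides vanish. I could also check $i=1$ as a sanity test: it reads $-[2]_t t\,[n+1]_t=-[n+1]_t\,t\,[2]_t$, which holds.

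The main obstacle is finding the right telescoping antiderivative $F$, i.e.\ guessing the closed forms of the partial sums. If that guess fails, I would fall back on a generating-function argument. One would multiply by $z^i$ and sum over $i$, which turns the right-hand side into a product of $\sum_j(-1)^j t^{\binom{j}{2}}\qbinom{n+j-1}{j-1}{t}[n-1+2j]_t z^j$ and explicit geometric series in $z$. That series is then evaluated using the $q$-binomial theorem $\sum_j t^{\binom{j}{2}}\qbinom{m}{j}{t}(-z)^j=(z;t)_m$ together with its negative-$m$ analogue $\sum_j\qbinom{n+j}{j}{t}z^j=1/(z;t)_{n+1}$. The left-hand side's generating function is handled the same way, and the comparison becomes an identity of rational functions in $z$, $t$ and $t^n$.
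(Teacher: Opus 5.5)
\textbf{The gap is in the telescoping step you flag as the main obstacle: as stated, that step is false.} Write $A_j=(-1)^jt^{\binom{j}{2}}\qbinom{n+j-1}{j-1}{t}[n-1+2j]_t$ and $S_c(i)=\sum_{j=1}^iA_jt^{cj}$. Your expansion of the $i$-dependent factor gives
\[(1-t)R_i(n)=\bigl(t^{i+1}+t^{n+3i+2}\bigr)S_{-1}(i)-t^{2i+2}\bigl(tS_{-2}(i)+t^nS_0(i)\bigr).\]
Only $S_{-1}$ telescopes in the way you describe. From $[n+2j-1]_t=[j-1]_t+t^{j-1}[n+j]_t$ one gets $S_{-1}(i)=(-1)^it^{\binom{i}{2}-1}[i]_t\qbinom{n+i}{i}{t}$.

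The sums $S_0$ and $S_{-2}$ have no closed form of the shape you propose. At $n=0$ we have $A_j=(-1)^jt^{\binom{j}{2}}[2j-1]_t$, and for $i\ge2$
\[S_0(i)=-1-\frac{1+t}{t^2}\sum_{j=3}^{i}(-1)^jt^{\binom{j}{2}}-\frac{1}{t^2(1-t)}\sum_{j=i+1}^{i+2}(-1)^jt^{\binom{j}{2}}.\]
As $i\to\infty$ this converges $t$-adically to a lacunary, hence irrational, partial theta series. By contrast, any expression built from $F(i)=\pm t^{\binom{i+1}{2}+ci+d}\qbinom{n+i}{i}{t}P(t^i,t^n)$, with $P$ a fixed Laurent polynomial, tends at $n=0$ to a rational function of $t$. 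This holds whether the closed form is $F(i)$ itself or $F(i)-F(0)$. The same obstruction occurs for $S_{-2}$; the partial theta parts cancel only in the combination $tS_{-2}+t^nS_0$. So guessing closed forms for the four sums separately cannot succeed.

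\textbf{Your generating-function fallback does not work as described either.} The $q$-binomial theorem sums $t^{\binom{j}{2}}\qbinom{m}{j}{t}z^j$ for fixed $m$, or $\qbinom{n+j}{j}{t}z^j$. It does not sum the product $t^{\binom{j}{2}}\qbinom{n+j-1}{j-1}{t}z^j$. Already at $n=0$, $\sum_jA_jz^j$ is a combination of partial theta functions $\sum_j(-1)^jt^{\binom{j}{2}}(t^cz)^j$. So the comparison is not an identity of rational functions in $z,t,t^n$; you would need the $q$-difference equations of these series instead.

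\textbf{How the paper avoids this.} It treats $s=t^n$ as a variable and notes that both sides have degree $i$ in $s$. It then checks the $i$ points $s=t^{-k}$ for $1\le k\le i$: there the left side vanishes, and the right side vanishes by pairing $j$ with $k+1-j$. Finally it checks $s=0$, where $\qbinom{n+j-1}{j-1}{t}$ degenerates to $1/(t;t)_{j-1}$. Euler's identity then makes both generating functions $(xt;t)_\infty$ times the same rational function.

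\textbf{How your idea can be repaired.} Group the two bad sums. A short increment check (a rational-function identity in $t^i,t^n$) shows
\[\sum_{j=1}^iA_j\bigl(t^{1-2j}+t^n\bigr)=(-1)^i[i]_t\qbinom{n+i}{i}{t}t^{\binom{i}{2}}\bigl(t^{-i-1}-t^{-2}+t^{-1}+t^{n+i-1}\bigr).\]
Substituting this and the formula for $S_{-1}(i)$ into the first display gives
\[R_i(n)=(-1)^i[i]_t(1+t^i)t^{\binom{i+1}{2}}\qbinom{n+i}{i}{t},\]
which is the left side because $[i]_t(1+t^i)=[2i]_t$. That would be a more elementary proof than the paper's, but it depends on finding this grouping, which your proposal does not.
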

\begin{proof}
    In fact, we will prove the following generalization in variables $t,s$:
    \begin{equation}
        (-1)^i[2i]_tt^{\binom{i+1}{2}}\frac{(ts;t)_i}{(t;t)_i}\isequal\sum_{j=1}^i(-1)^jt^{\binom{j}{2}}\frac{(ts;t)_{j-1}}{(t;t)_{j-1}}\frac{st^{2j-1}-1}{t-1}t^{i-j+1}([i-j+2]_t-[i-j]_tst^{i+j+1}).
    \end{equation}
    Note that the claimed identity is the case $s=t^n.$ Furthermore, we note this is a polynomial on $s$ of degree $i.$ Thus for each given $i,$ it suffices to prove this identity for $i+1$ distinct values of $s.$
    
    Consider first the case $s=t^{-k}$ for some $1\le k\le i.$ Then the left hand side is zero, while the right hand side is
    \begin{equation}
        -\sum_{j=1}^{k}t^{(j-k)(j-1)}\qbinom{k-1}{j-1}{t}[2j-1-k]_tt^{i-j+1}([i-j+2]_t-[i-j]_tt^{i+j+1-k}).
    \end{equation}
    This can be seen to be zero since the terms for $j$ and $k-j+1$ sum to zero for each $j.$

    Lastly, it remains to prove the identity for $s=0.$ We need to prove that
    \begin{equation}
        (-1)^i[2i]_tt^{\binom{i+1}{2}}\frac{1}{(t;t)_i}\isequal\sum_{j=1}^i(-1)^jt^{\binom{j}{2}}\frac{1}{(t;t)_{j-1}}\frac{1}{1-t}t^{i-j+1}[i-j+2]_t.
    \end{equation}
    To do so, we consider their generating series. For the left side, it is
    \begin{equation}
    \begin{split}
        L(x,t)&=\sum_{i\ge0}\frac{(-1)^i[2i]_tt^{\binom{i+1}{2}}}{(t;t)_i}x^i=\frac{1}{1-t}\left(\sum_{i\ge0}\frac{t^{\binom{i}{2}}}{(t;t)_i}(-xt)^i-\sum_{i\ge0}\frac{t^{\binom{i}{2}}}{(t;t)_i}(-xt^3)^i\right)=\frac{(xt;t)_\infty-(xt^3;t)_\infty}{1-t}\\
        &=\frac{(xt;t)_\infty}{1-t}\left(1-\frac{1}{(1-xt)(1-xt^2)}\right)=\frac{-xt(1+t-xt^2)}{(1-t)(1-xt)(1-xt^2)}\cdot (xt;t)_\infty
    \end{split}
    \end{equation}
    For the right side, it is
    \begin{equation}
    \begin{split}
        R(x,t)&=\sum_{i\ge0}\left(\sum_{j=1}^i(-1)^jt^{\binom{j}{2}}\frac{1}{(t;t)_{j-1}}\frac{1}{1-t}t^{i-j+1}[i-j+2]_t\right)x^i=\frac{1}{1-t}\sum_{j\ge1}\frac{(-1)^jt^{\binom{j}{2}+1}}{(t;t)_{j-1}}\sum_{i\ge j}t^{i-j}[i-j+2]_tx^i\\
        &=\frac{1}{1-t}\left(\sum_{j\ge0}\frac{(-1)^{j+1}t^{\binom{j+1}{2}+1}}{(t;t)_{j}}x^{j+1}\right)\left(\sum_{i\ge 0}(xt)^i[i+2]_t\right)=\frac{1}{1-t}\left(-xt\cdot (xt;t)_\infty\right)\left(\frac{\frac{1}{1-xt}-\frac{t^2}{1-xt^2}}{1-t}\right)\\
        &=\frac{-xt(1-t^2-xt^2+xt^3)}{(1-t)^2(1-xt)(1-xt^2)}\cdot(xt;t)_\infty=\frac{-xt(1+t-xt^2)}{(1-t)(1-xt)(1-xt^2)}\cdot(xt;t)_\infty.
    \end{split}
    \end{equation}
    Hence $L(x,t)=R(x,t)$ and the proof of the lemma is complete.
\end{proof}

\subsection{Proof of the main theorem}\label{KFthmSec}
We start by noting that
\begin{equation}\label{KFind}
        K^{C_n}_{\omega_{a},\omega_{b}}(t,u)=K^{C_{n-b}}_{\omega_{a-b},0}(t,u).
\end{equation}
We will also denote
\begin{equation}
    K^{C_n}_{\omega_{2k},0}(t,u)=\sum_{a=0}^k(-u)^aK(n,k,a)
\end{equation}
 for $K(n,k,a)\in\Z[t].$ With this, \Cref{KFthm} is equivalent to
\begin{equation}
    K(n,k,a)\isequal \frac{t^{n+1-k}-t^k}{t^{n+1}-1}\left(t^{a^2-2a}\qbinom{n+1}{2a,k-a,n+1-k-a}{t}(t;t^2)_a\right).
\end{equation}

We start by proving a crucial special case.
\begin{proposition}\label{KFthma=0}
    We have
    \begin{equation}
        K^{C_n}_{\omega_{2k},0}(t,0)=\frac{t^{n+1-k}-t^k}{t^{n+1}-1}\qbinom{n+1}{k}{t}.
    \end{equation}
\end{proposition}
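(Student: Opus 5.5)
The plan is to use the second half of \Cref{KFrec} (the $S_n$ version) with $\nu=0$, specialized at $u=0$, and combine it with \Cref{strComputationS}. Fix $n$ and argue by induction on $k$. Take $\lambda=\omega_{2k}$. By \Cref{KFrec},
\begin{equation}
    \sum_{\mu\in\Lambda_n^+}K^{C_n}_{\omega_{2k},\mu}(t,u)\cdot[e^\mu](\mathrm{str}(S_n))\in\Z[u].
\end{equation}
Since $K^{C_n}_{\omega_{2k},\mu}$ vanishes unless $\mu\le\omega_{2k}$, only $\mu=\omega_{2j}$ with $0\le j\le k$ contribute. By \eqref{KFind} these terms are $K^{C_{n-2j}}_{\omega_{2k-2j},0}$. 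We reduce modulo $u$ and use \Cref{strComputationS} for the coefficients $[e^{\omega_{2j}}]\mathrm{str}(S_n)$. The left side then lies in $\Z[t]\cap(\Z[u]+u\Z[t,u])$ modulo $u$, so it is a constant integer $c_{n,k}$. This gives the identity
\begin{equation}
    \sum_{j=0}^k(-1)^jt^{\binom{j+1}{2}}\qbinom{n-j}{j}{t}K^{C_{n-2j}}_{\omega_{2k-2j},0}(t,0)=c_{n,k}\in\Z.
\end{equation}
Note that this needs \Cref{strComputationS} for $\mathrm{str}(S_n)$ in rank $n$ at the weight $\omega_{2j}$.

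To pin down $c_{n,k}$, I would evaluate at $t=0$. Setting $t=u=0$ makes the partition function supported only at $0$, so $K_{\lambda,\mu}(0,0)=\delta_{\lambda\mu}$. At $t=0$ only the $j=k$ term survives, with coefficient $(-1)^k0^{\binom{k+1}{2}}$ times $K^{C_{n-2k}}_{0,0}=1$. Hence $c_{n,k}=\delta_{k,0}$. For $k\ge1$ this yields the recursion
\begin{equation}
    K^{C_n}_{\omega_{2k},0}(t,0)=-\sum_{j=1}^{k}(-1)^jt^{\binom{j+1}{2}}\qbinom{n-j}{j}{t}K^{C_{n-2j}}_{\omega_{2k-2j},0}(t,0),
\end{equation}
which determines all values from $K_{0,0}=1$. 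One should double check that the $t^{\binom{j+1}{2}}$ factor indeed kills the other terms at $t=0$; it does for $j\ge1$.

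It then remains to verify the $q$-series identity obtained by substituting the claimed formula $F(n,k)=\frac{t^{n+1-k}-t^k}{t^{n+1}-1}\qbinom{n+1}{k}{t}$:
\begin{equation}
    \sum_{j=0}^k(-1)^jt^{\binom{j+1}{2}}\qbinom{n-j}{j}{t}F(n-2j,k-j)=\delta_{k,0}.
\end{equation}
This step is the main obstacle. First I would rewrite $F(n,k)=\qbinom{n}{k}{t}-\qbinom{n}{k-1}{t}$, which holds because $\frac{t^{n+1-k}-t^k}{t^{n+1}-1}\qbinom{n+1}{k}{t}$ equals a difference of adjacent Gaussian binomials, by the standard computation. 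This splits the sum into two sums of the shape $\sum_j(-1)^jt^{\binom{j+1}{2}}\qbinom{n-j}{j}{t}\qbinom{n-2j}{k-j}{t}$. Combining the binomials gives $\qbinom{n-j}{j}{t}\qbinom{n-2j}{k-j}{t}=\qbinom{n-j}{k}{t}\qbinom{k}{j}{t}$. Each sum then becomes a terminating $q$-Chu--Vandermonde/$q$-binomial-theorem type sum. I would handle these by the same method as the lemma above: regard both sides as polynomials in $s=t^n$, check the identity at enough special values of $s$, or alternatively compare generating functions in $x$. The two halves should telescope to leave $\delta_{k,0}$.
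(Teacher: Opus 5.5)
Your proposal is correct and is essentially the paper's proof. It uses the same $S$-recurrence from \Cref{KFrec} with $\nu=0$, combined with \Cref{strComputationS} and \eqref{KFind}, followed by a $q$-binomial-theorem verification. Your evaluation at $t=0$ usefully justifies why the integer constant is $\delta_{k,0}$, which the paper leaves implicit. Note that the induction on $k$ must run over all ranks at once, since the recursion lowers $n$.

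To close your last step, observe that after your rewriting the second half is the first with $k$ replaced by $k-1$. Moreover, $\sum_{j=0}^{m}(-1)^jt^{\binom{j+1}{2}}\qbinom{m}{j}{t}\qbinom{n-j}{m}{t}=1$ for every $m\ge0$. To see this, expand $\qbinom{n-j}{m}{t}$ as a polynomial in $t^{-j}$ with constant term $1/(t;t)_m$, and apply $\sum_j(-1)^jt^{\binom{j}{2}}\qbinom{m}{j}{t}z^j=(z;t)_m$ at $z=t^{1-a}$, exactly as the paper does. So the difference of the two halves is $\delta_{k,0}$, with no telescoping argument needed.
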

\begin{proof}
    We will use the $S$-recurrences of \Cref{KFrec} with $\nu=(0^n)$ and $\lambda=\omega_{2k}.$ By \Cref{strComputationS}, we have
    \begin{equation}
        0=\sum_{i=0}^kK^{C_n}_{\omega_{2k},\omega_{2i}}(t,0)(-1)^it^{\binom{i+1}{2}}\qbinom{n-i}{i}{t}\quad\text{for }k>0.
    \end{equation}
    Thus
    \begin{equation}
        0=\sum_{i=0}^kK(n-2i,k-i,0)(-1)^it^{\binom{i+1}{2}}\qbinom{n-i}{i}{t}\quad\text{for }k>0.
    \end{equation}
    Note that these relations fully determine all $K(n,k,0)$ from the initial condition $K(n,0,0)=1.$ So it remains to verify the identity
    \begin{equation}
        \sum_{i=0}^k\frac{t^{n+1-k-i}-t^{k-i}}{t^{n+1-2i}-1}\qbinom{n+1-2i}{k-i}{t}(-1)^it^{\binom{i+1}{2}}\qbinom{n-i}{i}{t}\isequal 0,\quad\text{for }k>0.
    \end{equation}
    This is
    \begin{equation}
        (t^{n+1-k}-t^k)\left(\sum_{i=0}^k\frac{1}{t^{n+1-i}-1}\qbinom{n+1-i}{k-i,i,n+1-k-i}{t}(-1)^it^{\binom{i}{2}}\right)
    \end{equation}
    which for $k>0$ we may write as
    \begin{equation}
        \frac{(t^{n+1-k}-t^k)}{t^k-1}\left(\sum_{i=0}^k\qbinom{k}{i}{t}\qbinom{n-i}{k-1}{t}(-1)^it^{\binom{i}{2}}\right).
    \end{equation}
    Now we note that this inner sum is identically zero. For this, we write $\qbinom{n-i}{k-1}{t}$ as a polynomial in $t^{n-i}$
    \begin{equation}
        \qbinom{n-i}{k-1}{t}=\sum_{a=0}^{k-1}t^{(n-i)a}P_a(t)
    \end{equation}
    where $P_a(t)\in\Z[t,t^{-1}]$ do not depend on $i.$ The expression is clearly zero due to the $t$-binomial theorem
    \begin{equation}
        \sum_{i=0}^k\qbinom{k}{i}{t}(-1)^it^{\binom{i}{2}}t^{(n-i)a}=\prod_{i=0}^{k-1}(t^a-t^i)=0\quad\text{for }0\le a\le k-1.\qedhere
    \end{equation}
\end{proof}
\begin{proof}[Proof of \Cref{KFthm}]
    We will use the $L$-recurrences of \Cref{KFrec} with $\nu=(0^n)$ and $\lambda=\omega_{2k}.$ 
   
    Denoting $L_n=\prod_{\alpha\in R^{+,l}_n}(1-ue^\alpha)$ and 
    \begin{equation}
        \delta_{a,n}\defeq\sum_{\substack{\lambda\in\{0,2\}^n\subseteq \Lambda_n\\\lvert\lambda\vert=2a}}e^\lambda.
    \end{equation}
    as before, we note that
    \begin{equation}
        L_ne^{(0^n)}=\sum_{a\ge0}(-u)^a\delta_{a,n}.
    \end{equation}
    Hence, by \Cref{KFrec} with $\nu=(0^n)$ and $\lambda=\omega_{2k},$ together with \Cref{strComputationL}, we have that
    \begin{equation}
        \sum_{i=0}^k K^{C_n}_{\omega_{2k},\omega_{2i}}(t,u)\cdot u^i\qbinom{n}{2i}{t}(t;t^2)_i\in\Z[t].
    \end{equation}
    This is
    \begin{equation}
        \sum_{i=0}^k\sum_{a\ge0}(-u)^aK(n-2i,k-i,a)\cdot u^i\qbinom{n}{2i}{t}(t;t^2)_i\in\Z[t],
    \end{equation}
    which we may write as
    \begin{equation}
        \sum_{a\ge0}(-u)^a\sum_{i\ge0}K(n-2i,k-i,a-i)(-1)^i\qbinom{n}{2i}{t}(t;t^2)_i\in\Z[t].
    \end{equation}
    In other words, we obtain the recurrence relations
    \begin{equation}
        \sum_{i\ge0}K(n-2i,k-i,a-i)(-1)^i\qbinom{n}{2i}{t}(t;t^2)_i=0\quad\text{for }a>0.
    \end{equation}
    Now we note these uniquely determine all the $K(n,k,a)$ from the initial condition of the $K(n,k,0),$ which was verified in \Cref{KFthma=0}. Thus it remains to verify the identity
    \begin{equation}
        \sum_{i\ge0}\frac{t^{n+1-k-i}-t^{k-i}}{t^{n+1-2i}-1}\left(t^{(a-i)^2-2(a-i)}\qbinom{n+1-2i}{2(a-i),k-a,n+1-k-a}{t}(t;t^2)_{a-i}\right)(-1)^i\qbinom{n}{2i}{t}(t;t^2)_i\isequal 0\quad\text{for }a>0.
    \end{equation}
    Indeed, this is
    \begin{equation}
        \frac{t^{n+1-k-a}-t^{k-a}}{t^{n+1}-1}\sum_{i\ge0}\left((-1)^it^{(a-i)^2-(a-i)}\qbinom{n+1}{2(a-i),k-a,n+1-k-a,2i}{t}(t;t^2)_{a-i}(t;t^2)_i\right),
    \end{equation}
    which is
    \begin{equation}
        \frac{t^{n+1-k-a}-t^{k-a}}{t^{n+1}-1}\qbinom{n+1}{k-a,n+1-k-a,2a}{t}(t;t^2)_a\sum_{i\ge0}(-1)^i(t^2)^{\binom{a-i}{2}}\qbinom{a}{i}{t^2},
    \end{equation}
    and the $t^2$-binomial theorem tells us that
    \begin{equation}
        \sum_{i\ge0}(-1)^i(t^2)^{\binom{a-i}{2}}\qbinom{a}{i}{t^2}=\begin{cases}
            1&\text{if }a=0,\\
            0&\text{if }a>0.
        \end{cases}\qedhere
    \end{equation}
\end{proof}
\begin{corollary}\label{ThmA}
    For $0\le b\le a\le n$ with $a\equiv b\mod 2,$ we have
    \begin{equation}
        K^{C_n}_{\omega_{a},\omega_b}(t,u)=t^{\frac{a-b}{2}}\frac{t^{n+1-a}-1}{t^{n+1-b}-1}\sum_{i=0}^{\frac{a-b}{2}}(-u)^i\left(t^{i^2-2i}\qbinom{n-b+1}{2i,\frac{a-b}{2}-i,n+1-\frac{a+b}{2}-i}{t}(t;t^2)_i\right).
    \end{equation}
\end{corollary}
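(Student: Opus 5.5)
The plan is to deduce the corollary directly from \Cref{KFthm} via the reduction \eqref{KFind}, which states $K^{C_n}_{\omega_a,\omega_b}(t,u)=K^{C_{n-b}}_{\omega_{a-b},0}(t,u)$. I take this reduction as given; it is noted at the start of \Cref{KFthmSec}. If one wants to justify it, the point is that the first $b$ coordinates are equal to $1$ in both weights. So in the alternating sum defining $K$ only Weyl group elements essentially fixing those coordinates contribute. The partition function then factors through the $C_{n-b}$ root subsystem on the last $n-b$ coordinates, after using the shift $\rho_n=(n,\dots,1)$.

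Write $a-b=2k$, so $k=\tfrac{a-b}{2}$, and put $m=n-b$. Applying \Cref{KFthm} with $n$ replaced by $m$ gives
\begin{equation*}
K^{C_m}_{\omega_{2k},0}(t,u)=\frac{t^{m+1-k}-t^k}{t^{m+1}-1}\sum_{i=0}^k(-u)^i t^{i^2-2i}\qbinom{m+1}{2i,k-i,m+1-k-i}{t}(t;t^2)_i .
\end{equation*}
It remains to substitute back and check that each piece matches the stated formula.

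First, the multinomial. We have $m+1=n-b+1$, and $m+1-k-i=n-b+1-\tfrac{a-b}{2}-i=n+1-\tfrac{a+b}{2}-i$. This is exactly the bottom row appearing in the statement.

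Second, the prefactor. We factor $t^k$ out of the numerator: $t^{m+1-k}-t^k=t^k\bigl(t^{m+1-2k}-1\bigr)$, and $m+1-2k=n-b+1-(a-b)=n+1-a$. So the prefactor equals $t^{\frac{a-b}{2}}\frac{t^{n+1-a}-1}{t^{n+1-b}-1}$, as claimed.

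The vanishing assertions in Theorem A, namely zero unless $a\ge b$ and $a\equiv b \pmod 2$, follow from the root lattice condition $\lambda\ge\mu$. They are not part of this corollary's statement, so I will not treat them here. There is no genuine obstacle in this argument; the only care needed is the index bookkeeping in the substitution.
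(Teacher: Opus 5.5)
Your proposal is correct and follows the paper's own proof, which likewise deduces the corollary at once from \eqref{KFind} and \Cref{KFthm}. Your index bookkeeping is exactly the substitution needed: setting $m=n-b$ and $k=\frac{a-b}{2}$, then factoring $t^k$ out of $t^{m+1-k}-t^k$.
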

\begin{proof}
    This follows at once from \eqref{KFind} and \Cref{KFthm}.
\end{proof}

\section{Explicit Lusztig--Kato formula for non-split orthogonal groups}\label{SectionLK}
Let $F/\Q_p$ be a finite extension with residue field of cardinality $q,$ and let $V$ be a quadratic space over $F$ of discriminant $1\in\Q_p^\times/\Q_p^{\times2},$ Hasse invariant $-1$ and dimension $2n+2$ for some $n\ge1.$ Consider the group $G=SO(V)$ with special subgroup $K$ and split torus $A.$

We consider the root system of the little Weyl group $W,$ which is the root system $R_n$ considered in \Cref{KFsetup}. We have the Cartan decomposition
\begin{equation}
    K\backslash G/K=A/A\cap K.
\end{equation}
We consider the ($\Z$-valued) Hecke algebra $\mathcal{H}(G,K)=C_c^\infty(K\backslash G/K,\Z)$ under convolution.
\begin{definition}
    For $1\le i\le n,$ we denote $T_i\in\mathcal{H}(G,K)$ the operator corresponding to $\varpi^{\omega_i}\in A$ where $\omega_i=(\underbrace{1,\ldots,1}_i,\underbrace{0,\ldots,0}_{n-i})$ are the fundamental weights. We also denote $T_0\in\mathcal{H}(G,K)$ the identity operator.
\end{definition}

Denoting $E/F$ the unramified quadratic extension and $\mathrm{Gal}(E/F)=\{1,\sigma\},$ the Langlands dual ${}^LG=\check{G}\rtimes\{1,\sigma\}$ of $G$ is identified with the split orthogonal group ${}^LG=O(2n+2)$ where $\sigma\in O(2n+2)$ is the reflection along the last root. We may choose the usual basis with quadratic form $x_1x_{2n+2}+x_2x_{2n+1}+\cdots+x_{n+1}x_{n+2},$ so that the maximal torus $\check{T}$ is the diagonal subgroup $\check{T}=\{\mathrm{diag}(t_1,\ldots,t_{n+1},t_{n+1}^{-1},\ldots,t_1^{-1})\colon t_1,\ldots,t_{n+1}\in \C^\times\}.$
\begin{definition}
    We consider the Satake transform
    \begin{equation}
        \mathrm{Sat}\colon \mathcal{H}(G,K)\to \C[X^\bullet(\check{T})^\sigma]^W=\C[\boldsymbol{\mu}_1,\ldots,\boldsymbol{\mu}_n]^{\mathrm{sym}}
    \end{equation}
    where $\boldsymbol{\mu}_i=\mu_i+\mu_i^{-1}\in\C[X^\bullet(\check{T})^\sigma]^{W}$ for $\mu_i\in X^\bullet(\check{T})$ the character of $\check{T}$ which is projection onto the $i$-th entry.
\end{definition}
\begin{theorem}[Explicit Lusztig--Kato formula]\label{ThmB}
    We denote $\boldsymbol{\mu}=(\boldsymbol{\mu}_1,\ldots,\boldsymbol{\mu}_n)$, and denote by $s_i(\boldsymbol{\mu})$ its $i$-th elementary symmetric polynomial. For $0\le\delta\le n,$ we have
    \begin{equation}
        \sum_{\substack{\delta\le i\le n\\i\equiv\delta\mod2}}\frac{q^{\delta+1}-1}{q^{i+1}-1}\qbinom{i+1}{\frac{i-\delta}{2}}{q^2}\mathrm{Sat}(T_{n-i})=q^{\frac{1}{2}(n^2+n-\delta^2-\delta)}\sum_{\substack{\delta\le i\le n\\i\equiv\delta\mod2}}\frac{\delta+1}{i+1}\binom{i+1}{\frac{i-\delta}{2}}s_{n-i}(\boldsymbol{\mu}).
    \end{equation}
\end{theorem}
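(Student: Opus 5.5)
We would deduce \Cref{ThmB} from the Lusztig--Kato formula \Cref{LusztigKato}, specialized to the parameters of the non-split group, combined with the explicit values of \Cref{ThmA}.

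\textbf{Step 1: identify the Satake transform with Hall--Littlewood polynomials.} By Macdonald's formula for the spherical function of the quasi-split group $G=SO(V)$ with respect to the special subgroup $K$, there are normalizations such that
\begin{equation}
\mathrm{Sat}(T_\mu)=q^{\langle\mu,\rho\rangle}P_n(\mu,q^{-1},q^{-2})
\end{equation}
for $\mu\in\Lambda_n^+$. Here the parameters $t_\alpha$ are $q^{-1}$ on the short roots $\pm e_i\pm e_j$ and $q^{-2}$ on the long roots $\pm2e_i$. The long roots correspond to the relative root spaces of dimension $2$, coming from the unramified quadratic extension. The variables satisfy $e^{e_i}\mapsto\mu_i$, so that the $W$-invariants are generated by the $\boldsymbol{\mu}_i$. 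We would check the weighting $q^{\langle\mu,\rho\rangle}$ against $\mathrm{Sat}(T_0)=1$ and the volume factors $W_\mu(q^{-1},q^{-2})$. Note that $\langle\omega_m,\rho\rangle$ is computed with the relevant half-sum: each long root contributes with its multiplicity.

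\textbf{Step 2: invert the Lusztig--Kato formula.} For type $C_n$, the character $\chi_n(\omega_m)$ of the fundamental representation equals $s_m(\boldsymbol{\mu})-s_{m-2}(\boldsymbol{\mu})$. Equivalently, $s_m(\boldsymbol{\mu})=\sum_{j\ge0}\chi_n(\omega_{m-2j})$. Applying \Cref{LusztigKato} with $(t,u)=(q^{-1},q^{-2})$ gives
\begin{equation}
\chi_n(\omega_a)=\sum_{b}K^{C_n}_{\omega_a,\omega_b}(q^{-1},q^{-2})P_n(\omega_b,q^{-1},q^{-2}).
\end{equation}
By \Cref{KFcor} and \eqref{KFind}, the specialization $u=t^2$ collapses to
\begin{equation}
K^{C_n}_{\omega_a,\omega_b}(t,t^2)=t^{\frac{a-b}{2}}\frac{t^{n+1-a}-1}{t^{n+1-b}-1}\qbinom{n+1-b}{\frac{a-b}{2}}{t^2}
\end{equation}
up to the rewriting in Corollary~\ref{KFcor}. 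Setting $a=n-\delta$ and $b=n-i$ puts the coefficients in the form $\frac{q^{\delta+1}-1}{q^{i+1}-1}\qbinom{i+1}{(i-\delta)/2}{q^{2}}$, after converting $t=q^{-1}$ into powers of $q$. The target identity is then a triangular change of basis between the $s_{n-i}(\boldsymbol{\mu})$ and the $\mathrm{Sat}(T_{n-i})$.

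\textbf{Step 3: match the identity.} The right-hand side of \Cref{ThmB} is a combination of the $s_{n-i}$ with ballot-number coefficients $\frac{\delta+1}{i+1}\binom{i+1}{(i-\delta)/2}$. These are precisely the coefficients expressing $\chi_n(\omega_{n-\delta})$, or rather its classical analogue, in terms of the $s$'s. More precisely, we expect $\sum_i\frac{\delta+1}{i+1}\binom{i+1}{(i-\delta)/2}s_{n-i}$ to be the inverse transform of $\chi(\omega_{n-i})=s_{n-i}-s_{n-i-2}$ appearing naturally at $q=1$. So the plan is as follows. First, express the left-hand side via Steps 1--2 as $\sum_i c_{\delta,i}(q)\,q^{\langle\omega_{n-i},\rho\rangle}P_n(\omega_{n-i})$. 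Then verify that this equals $q^{\frac12(n^2+n-\delta^2-\delta)}$ times a fixed integer combination of characters, which is the $q$-independent ballot combination. This amounts to showing that a product of two triangular matrices, the $q^2$-binomial one from \Cref{ThmB} and the Kostka--Foulkes one, is the $q$-free ballot matrix up to the scalar. Since $\langle\omega_{n-\delta},\rho\rangle-\langle\omega_{n-i},\rho\rangle$ produces exactly the exponent shift, the verification reduces to a $q^2$-Vandermonde/Chu-type summation: one multiplies the two $q^2$-binomial expressions and sums using the $q$-binomial theorem, as in the proof of \Cref{KFthm}.

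\textbf{Main obstacle.} We expect the main difficulty to be Step 1, namely pinning down the exact normalization of Macdonald's formula for the non-split group: which roots carry $q^{-2}$, and the precise power of $q$. Once that is fixed, the summation in Step 3 should be a routine $q$-series identity.
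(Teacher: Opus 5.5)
\textbf{Gap: the character formula in Step 2 is wrong, and Step 3 targets the wrong identity.}

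Your Steps 1 and 2 agree with the paper. Macdonald's formula gives $\mathrm{Sat}(T_{n-i})=q^{\frac12(n^2+n-i^2-i)}P_n(\omega_{n-i},q^{-1},q^{-2})$. \Cref{KFcor} with \eqref{KFind} gives $K^{C_n}_{\omega_{n-\delta},\omega_{n-i}}(q^{-1},q^{-2})=q^{-\frac12(i^2+i-\delta^2-\delta)}\frac{q^{\delta+1}-1}{q^{i+1}-1}\qbinom{i+1}{\frac{i-\delta}{2}}{q^2}$.

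This already settles all the $q$-dependence. The coefficients on the left of \Cref{ThmB} \emph{are} the Kostka--Foulkes values, up to exactly the power of $q$ that cancels the Macdonald normalization. So \Cref{LusztigKato} gives at once that the left-hand side equals $q^{\frac12(n^2+n-\delta^2-\delta)}\chi_n(\omega_{n-\delta})$. There is no product of two triangular matrices and no $q^2$-Vandermonde summation to carry out. The whole remaining content of the theorem is the classical identity
\[
\chi_n(\omega_{n-\delta})=\sum_{\substack{\delta\le i\le n\\ i\equiv\delta\bmod 2}}\frac{\delta+1}{i+1}\binom{i+1}{\frac{i-\delta}{2}}s_{n-i}(\boldsymbol{\mu}).
\]
Your Step 3 does not prove this, and the input you propose for it is false.

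Your claim $\chi_n(\omega_m)=s_m(\boldsymbol{\mu})-s_{m-2}(\boldsymbol{\mu})$ fails already for $n=m=2$. The $5$-dimensional representation has character $\chi_2(\omega_2)=1+\boldsymbol{\mu}_1\boldsymbol{\mu}_2=s_2(\boldsymbol{\mu})+1$, not $s_2(\boldsymbol{\mu})-1$.

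The correct formula uses elementary symmetric functions in the $2n$ variables $\mu^{\pm1}=(\mu_1,\mu_1^{-1},\dots,\mu_n,\mu_n^{-1})$:
\[
\chi_n(\omega_m)=s_m(\mu^{\pm1})-s_{m-2}(\mu^{\pm1}).
\]
The paper obtains this via Wendt, identifying $\chi_n(\omega_{n-\delta})$ with the twisted character of $\bigwedge^{n-\delta}\mathrm{std}$ on $\check T\sigma$ in $O(2n+2)$. There, the eigenvalues $\pm1$ of $t\sigma$ on the span of the $(n+1)$-st and $(n+2)$-nd basis vectors produce the subtracted term. Equivalently, it follows from $\bigwedge^m\C^{2n}\cong V(\omega_m)\oplus\bigwedge^{m-2}\C^{2n}$ for $Sp_{2n}$.

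The ballot numbers then come from a classical computation, not a $q$-series one. Expanding $\prod_i(1+\mu_iz)(1+\mu_i^{-1}z)=\prod_i(1+\boldsymbol{\mu}_iz+z^2)$ gives $s_m(\mu^{\pm1})=\sum_j\binom{n-m+2j}{j}s_{m-2j}(\boldsymbol{\mu})$. The resulting difference $\binom{i}{\frac{i-\delta}{2}}-\binom{i}{\frac{i-\delta}{2}-1}=\frac{\delta+1}{i+1}\binom{i+1}{\frac{i-\delta}{2}}$ is exactly the ballot coefficient.

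With your formula, the right-hand side would come out as $q^{\frac12(n^2+n-\delta^2-\delta)}\bigl(s_{n-\delta}(\boldsymbol{\mu})-s_{n-\delta-2}(\boldsymbol{\mu})\bigr)$. Since the $s_j(\boldsymbol{\mu})$ are linearly independent, this differs from the stated positive ballot combination as soon as $n-\delta\ge2$, so the argument cannot be closed as planned. The obstacle you flag, the normalization in Step 1, is routine; the real difficulty is this character computation.
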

\begin{proof}
    By the Macdonald formula for quasi-split groups deduced from \cite{Casselman}, we have that
    \begin{equation}
        \mathrm{Sat}(T_{n-i})=q^{\langle\omega_{n-i},\rho_n\rangle}P_n(\omega_{n-i},q^{-1},q^{-2})=q^{\frac{1}{2}(n^2+n-i^2-i)}P_n(\omega_{n-i},q^{-1},q^{-2})
    \end{equation}
    for the polynomial $P_n(\omega_{n-i},t,u)$ defined in \Cref{strSection}. Thus by \Cref{LusztigKato} we have
    \begin{equation}
        \sum_{\delta\le i\le n}q^{-\frac{1}{2}(n^2+n-i^2-i)}K^{C_n}_{\omega_{n-\delta},\omega_{n-i}}(q^{-1},q^{-2})\cdot \mathrm{Sat}(T_{n-i})=\chi_n(\omega_{n-\delta}).
    \end{equation}
    By \Cref{KFcor}, the nonzero terms in the above sum are the ones with $i\equiv\delta\mod 2,$ in which case
    \begin{equation}
        K^{C_n}_{\omega_{n-\delta},\omega_{n-i}}(q^{-1},q^{-2})=K^{C_{i}}_{\omega_{i-\delta},\omega_0}(q^{-1},q^{-2})=\frac{q^{\frac{i-\delta}{2}-(i+1)}-q^{-\frac{i-\delta}{2}}}{q^{-(i+1)}-1}\qbinom{i+1}{\frac{i-\delta}{2}}{q^{-2}}.
    \end{equation}
    This is
    \begin{equation}
        K^{C_n}_{\omega_{n-\delta},\omega_{n-i}}(q^{-1},q^{-2})=\frac{q^{\delta+1}-1}{q^{i+1}-1}\qbinom{i+1}{\frac{i-\delta}{2}}{q^2}q^{-\frac{1}{2}(i^2+i-\delta^2-\delta)}.
    \end{equation}

    Thus it remains to see that
    \begin{equation}
        \chi_n(\omega_{n-\delta})\isequal \sum_{\delta\le i\le n}\frac{\delta+1}{i+1}\binom{i+1}{\frac{i-\delta}{2}}s_{n-i}(\boldsymbol{\mu}).
    \end{equation}
    Note that the representation of $\check{G}=SO(2n+2)$ of highest weight $\omega_{n-\delta}$ is simply $W_{n-\delta}\defeq\bigwedge^{n-\delta}\mathrm{std},$ which extends to an irreducible representation of ${}^LG=O(2n+2).$ Thus by \cite[Section 2.3]{Wendt}, we have that $\chi_n(\omega_{n-\delta})$ is the character of $\check{T}\sigma$ in $W_{n-\delta}.$ That is,
    \begin{equation}
        \chi_{W_{n-\delta}}(t\sigma)=\sum_{\substack{I\subseteq[2n+2]\setminus\{n+1,n+2\}\\\lvert I\rvert=n-\delta}}\prod_{i\in I}\mu_i(t)-\sum_{\substack{\{n+1,n+2\}\subseteq I\subseteq[2n+2]\\\lvert I\rvert=n-\delta}}\prod_{i\in I}\mu_i(t)
    \end{equation}
    and thus we are looking at
    \begin{equation}
        \chi_n(\omega_{n-\delta})=s_{n-\delta}(\mu_1,\mu_1^{-1},\ldots,\mu_{n},\mu_{n}^{-1})-s_{n-\delta-2}(\mu_1,\mu_1^{-1},\ldots,\mu_{n},\mu_{n}^{-1}).
    \end{equation}
    This is
    \begin{equation}
    \begin{split}
        &\sum_{0\le i\le n-\delta}\binom{\delta+i}{i/2}s_{n-\delta-i}(\boldsymbol{\mu})-\sum_{0\le i\le n-\delta-2}\binom{\delta+i+2}{i/2}s_{n-\delta-2-i}(\boldsymbol{\mu})\\
        &\qquad=\sum_{\delta\le i\le n}\binom{i}{\frac{i-\delta}{2}}s_{n-i}(\boldsymbol{\mu})-\sum_{\delta+2\le i\le n}\binom{i}{\frac{i-\delta}{2}-1}s_{n-i}(\boldsymbol{\mu}),
    \end{split}
    \end{equation}
    and since
    \begin{equation}
        \binom{n}{m}-\binom{n}{m-1}=\binom{n+1}{m}\frac{n-2m+1}{n+1}\quad\text{for }n,m\ge0,
    \end{equation}
    we conclude that indeed
    \begin{equation}
        \chi_n(\omega_{n-\delta})=\sum_{\delta\le i\le n}\frac{\delta+1}{i+1}\binom{i+1}{\frac{i-\delta}{2}}s_{n-i}(\boldsymbol{\mu}).\qedhere
    \end{equation}
\end{proof}

\begingroup
\bibliographystyle{alpha}
\bibliography{structural/references}
\endgroup
\end{document}